\documentclass[11pt]{article}
\usepackage{mylatexsetting}
\usepackage{longtable}
\def\mytitle{Kernel Characterisations of Stochastic Orders Within Parametric Density Families}

\def\myabstract{
We develop kernel criteria for the likelihood-ratio, hazard-rate, usual stochastic, and relative log-concavity orders in parametric families of univariate probability laws with densities. The score is the derivative of the log density with respect to the parameter, and a kernel equals the score up to an additive term depending only on the parameter. Kernel monotonicity gives likelihood-ratio order, kernel concavity gives relative log-concavity, and two tail-conditional mean inequalities give the hazard-rate and usual stochastic orders. The same construction applies along joint-parameter paths and to comparisons between two laws whose densities admit parameter-dependent factors, where the log-factor ratio is used as the kernel. For compound sums with a random number of i.i.d. terms, the induced kernel is the posterior mean of the kernel of the summand count. The applications recover standard one-parameter orderings, give likelihood-ratio comparisons for compound laws, and handle nonmonotone examples through the tail-conditional criteria.
}

\title{\mytitle}
\author{Zakaria Derbazi\footnote{Correspondence address: z.derbazi@qmul.ac.uk}\\{\itshape\small Queen Mary University of London}}
\date{\today}

\begin{document}

\maketitle

\begin{abstract}
\myabstract
\end{abstract}

\medskip\noindent
\textbf{MSC 2020:} 60E15.

\medskip\noindent
\textbf{Keywords:} stochastic order, likelihood-ratio order, hazard-rate order, relative log-concavity, parametric families, compound distributions.

\bigskip
\section{Introduction}
\label{sec:introduction}

This paper develops kernel criteria for the likelihood-ratio, relative log-concavity, hazard-rate, and usual stochastic orders in parametric families of univariate probability measures. For densities differentiable in the parameter, the score is the derivative of the log density with respect to that parameter. A kernel is a function on the support whose centred version under the current law equals the score. 

The kernel yields characterisations of all four stochastic orders between laws in the family, and is often simpler to analyse than the likelihood ratio. Monotonicity gives the likelihood-ratio order, concavity gives relative log-concavity, and two tail-conditional mean inequalities give the hazard-rate and the usual stochastic orders. The construction also applies to direct comparisons of two laws whose densities admit parameter-dependent factors. The difference of the two log factors then replaces the derivative of the log density with respect to the parameter in the same tests.

The kernel lives at the level of parameter-dependent factors, before centring produces the score. This makes the same criteria portable to joint-parameter paths, pairwise factor-form comparisons, and compound laws.

Our main contributions are as follows:
\begin{enumerate}[label=\textup{(\roman*)}]
\item Kernel criteria for comparing two members of the same parametric family in the likelihood-ratio, relative log-concavity, hazard-rate, and usual stochastic orders, via a uniform four-order characterisation in terms of the kernel and its tail-conditional means (Proposition~\ref{prop:weighted-identity}, Lemma~\ref{lem:three-levels}, Theorem~\ref{thm:four-criteria}).

\item A superlevel-set criterion that establishes hazard-rate and usual stochastic ordering when the kernel is nonmonotone or nonconcave (Proposition~\ref{prop:kernel-superlevel}, Corollary~\ref{cor:concave-kernel}, Corollary~\ref{cor:unimodal-kernel}).

\item Extensions to joint-parameter and interpolation paths, and to comparisons of laws not belonging to a common parametric family, where the relevant kernel is either the path derivative or the difference of two log factors.

\item A compound-construction extension by which the compound law inherits a kernel from the underlying counting law through a posterior-averaging identity, extending the compound-geometric comparison of~\cite{XiaLv2024} beyond relative log-concavity (Proposition~\ref{prop:compound-score}, Corollary~\ref{cor:compound-kernel}, Lemma~\ref{lem:cond-mean-monotone}, Proposition~\ref{prop:compound-monotone}).

\item A catalogue of examples illustrating the criteria on classical discrete and continuous distributions (Tables~\ref{tab:common-kernels} and~\ref{tab:compound-slopes}).
\end{enumerate}

Our approach sits alongside a substantial literature on criteria formulated directly in terms of the likelihood ratio. For instance, stochastic ordering for exponential families is treated in~\cite{Yu2009}, where the hazard-rate and usual stochastic orders are reduced to endpoint conditions under relative log-concavity of the likelihood ratio. The paper~\cite{KlenkeMattner2010} catalogues comparison methods across several classical discrete laws. Comparisons involving compound discrete laws are studied in~\cite{XiaLv2024} through relative log-concavity, and threshold-type results for parameter-mixed laws appear in~\cite{MisraSinghHarner2003,AlamatsazAbbasi2008,PudprommaratBodhisuwan2012}. Shape and sign-pattern conditions on the likelihood ratio are used in~\cite{DerbaziPairwise} to obtain endpoint criteria for the hazard-rate and usual stochastic orders, together with likelihood-ratio endpoint tests under relative log-concavity.

The paper is organised as follows. Section~\ref{sec:prelim} fixes notation and conventions and recalls the four stochastic orders. Section~\ref{sec:kernel} develops the criteria for comparing two laws in the same parametric family, together with the superlevel-set criterion and the compound-law construction. Section~\ref{sec:applications} applies the criteria to classical discrete and continuous distributions, organised by how the kernel is obtained: from the score of a parametric density family, from the derivative along a path in a multi-parameter family, from a direct log-factor difference between two laws, or from a posterior average for compound laws. Section~\ref{sec:discussion} concludes with a short discussion on two further directions.


\section{Notation and Preliminaries}
\label{sec:prelim}

\subsection{Conventions}\label{subsec:convention}
Throughout, the probability measures under consideration are dominated by a $\sigma$-finite measure $\mu$ on a totally ordered space $E$, and densities are understood as Radon--Nikodym derivatives with respect to $\mu$. The standard choices are counting measure on an integer interval and Lebesgue measure on an interval of $\R$.  We write \(\delta_x\) for the unit point mass at \(x\). In examples with both an atom at zero and an absolutely continuous component, we use the dominating measure \(\delta_0+\dx\). For a probability measure $P$ on $E$ we write $f_P\coloneqq \dx[]P/\dx[\mu]$ for its density, $\bar F_P(x)\coloneqq P([x,\infty)\cap E)$ for its survival function, \(h_P(x)\coloneqq f_P(x)/\bar F_P(x)\) for its hazard rate (where \(\bar F_P(x)>0\)), and $\supp(P)\coloneqq \{f_P>0\}$ up to $\mu$-null sets.

For a pair of probability measures \(P,Q\), set \(\rho\coloneqq P+Q\) and \(S\coloneqq \supp(\rho)\). Their likelihood ratio is
\begin{equation}
\label{def.ell}
\ell(x)\coloneqq  \frac{(\dx[]P/\dx[]\rho)(x)}{(\dx[]Q/\dx[]\rho)(x)}, \qquad x\in S,
\end{equation}
with the conventions \(a/0\coloneqq +\infty\) for \(a>0\) and \(0/0\coloneqq 0\). Where both \(\mu\)-densities are positive, this agrees with the ordinary density ratio \(f_P/f_Q\).

A parametric family of probability measures is written \((P_\vartheta)_{\vartheta\in\mathcal V}\), where \(\mathcal V\subseteq\R^d\). It is specified by a corresponding family of \(\mu\)-densities \((f_\vartheta)_{\vartheta\in\mathcal V}\), so that for measurable  $A\subseteq E$
\[
P_\vartheta(A)=\int_A f_\vartheta(x)\,\mu(\dx[x]).
\]
We reserve \emph{parametric family} for the indexed probability measures being compared, and \emph{density family} for their densities with respect to \(\mu\). Terms such as exponential family, generalised power-series family (abbreviated GPS), and generalised hypergeometric family are used in their usual sense, as classes of parametric families with a common density or mass-function structure.

The criteria below are one-dimensional. Thus, when \(d>1\), all parameters except the one under consideration are held fixed, and the resulting parametric density family is written \((f_\nu)_{\nu\in\mathcal I}\), with associated probability measures \((P_\nu)_{\nu\in\mathcal I}\) and \(\mathcal I\subseteq\R\) an interval. If several parameters vary together, we write the resulting one-parameter path as \((P_t)_{t\in\mathcal T}\), with \(\mathcal T\subseteq\R\) an interval. The common support of the laws under comparison is denoted \(J\subseteq E\). We use $X,Y$ for random variables with values in $E$, distributed according to the law under consideration, and take expectations under that law.

We write $\N_0\coloneqq \{0\}\cup\N$. In the discrete case, sets of the form $[a,b]\coloneqq \{a,\dots,b\}$ and $[a,\infty)\coloneqq \{a,a{+}1,\dots\}$ are intervals in $\N_0$. Forward differences are $\Delta h(k)\coloneqq h(k+1)-h(k)$ and $\Delta^2 h(k)\coloneqq h(k+2)-2h(k+1)+h(k)$. We call $k$ admissible for $\Delta^2$ when $k,k+1,k+2$ all lie in the support. We use the Pochhammer symbol $(a)_k\coloneqq a(a+1)\cdots(a+k-1)$ with $(a)_0\coloneqq 1$. Recall that $\partial_a\log(a)_k=\psi(a+k)-\psi(a)$, where $\psi(x)\coloneqq \Gamma'(x)/\Gamma(x)$  is the digamma function. 
The indicator of a set $A\subseteq E$ is $\one\{A\}$.

\begin{assumption}[Differentiation under the integral]
\label{asm:diff-under-integral}
For every parametric density family \((f_\nu)_{\nu\in\mathcal I}\) considered below, with associated probability measures \((P_\nu)_{\nu\in\mathcal I}\) and \(\mathcal I\subseteq\R\) an interval, the density \(f_\nu(x)\) is positive on a common interval \(J\subseteq E\) and \(\nu \mapsto\log f_\nu(x)\) is \(C^1\) on \(\mathcal I\) for every \(x\in J\). For every compact interval \(B\subseteq\mathcal I\), there exists a majorant \(m\in L^1(J,\mu)\) such that
\[
\sup_{\nu\in B}\bigl|\partial_\nu f_\nu(x)\bigr|\le m(x)
\qquad \mu\text{-a.e.}
\]
\end{assumption}
This justifies differentiation under the integral (or sum) sign, which we use without further mention. In particular, once the score is defined in Section~\ref{sec:kernel}, the maps \(\nu\mapsto s_\nu(x)\) and \(\nu\mapsto \partial_\nu\log\bar F_\nu(x)\) are continuous at every fixed \(x\in J\) with \(\bar F_\nu(x)>0\). This continuity is deployed in the necessity direction of Theorem~\ref{thm:four-criteria}.

\subsection{Shape notions and stochastic orders}\label{subsec:orders}
\begin{defn}[Log-concavity, PF$_2$, and TP$_2$]
A positive function \(a:J\to(0,\infty)\) on an interval \(J\subseteq E\) is \emph{log-concave} if \(\log a\) is concave on \(J\). In the discrete case this is equivalent to \(\Delta^2\log a(k)\le0\) for every admissible \(k\in J\) and in the continuous case it is equivalent to \((\log a)''(x)\le0\) wherever the second derivative exists.

For a sequence \(b=(b_k)_{k\in\Z}\), we say that \(b\) is a \emph{P\'olya frequency sequence of order 2} (PF$_2$) if \(b_k\ge0\) for all \(k\), the support \(\{k:b_k>0\}\) is an interval in \(\Z\), and \(b\) is log-concave on its support.  A probability distribution \(F\) on \(\N_0\) is PF$_2$ if its mass sequence, extended by zero outside its support, is PF$_2$.

A nonnegative kernel \(M:I\times J\to[0,\infty)\) on two totally ordered sets is \emph{totally positive of order 2} (TP$_2$) if
\[
M(i,j)M(i',j')\ge M(i,j')M(i',j)
\]
whenever \(i\le i'\) and \(j\le j'\). See~\cite{Karlin1968} for the general total positivity theory.
\end{defn}

We recall the four orders used throughout. See~\cite{ShakedShanthikumar,Whitt1985} for further background.

\begin{defn}\label{def:orders}
Let \(P\) and \(Q\) be probability measures on \(E\), with likelihood ratio \(\ell\) as in~\eqref{def.ell}.
\begin{enumerate}[label=\textup{(\roman*)}]

\item \emph{Usual stochastic order} ($\st$): \(P\st Q\) if
\(\bar F_P(x)\le\bar F_Q(x)\) for every \(x\in E\).

\item \emph{Hazard-rate order} ($\hr$): \(P\hr Q\) if \(\bar F_P(x)/\bar F_Q(x)\) is nonincreasing wherever \(\bar F_Q(x)>0\) (equivalently, \(h_P(x)\ge h_Q(x)\) wherever both hazard rates are defined).

\item \emph{Likelihood-ratio order} ($\lr$): \(P\lr Q\) if \(\ell\) is nonincreasing on \(\supp(P)\cup\supp(Q)\).

\item \emph{Relative log-concavity} ($\lc$): when \(\supp(P)\) is an interval contained in \(\supp(Q)\), \(P\lc Q\) if \(\log\ell\) is concave on \(\supp(P)\).
\end{enumerate}
\end{defn}

In Section~\ref{sec:kernel}, the criteria are stated in the order \(\lr,\lc,\st,\hr\), matching the corresponding kernel conditions: monotonicity, concavity, and two tail-conditional inequalities.


\section{Kernel Criteria and Compound Laws}
\label{sec:kernel}

Throughout this section, let \((f_\nu)_{\nu\in\mathcal I}\) be a parametric density family satisfying Assumption~\ref{asm:diff-under-integral}, with associated probability measures \((P_\nu)_{\nu\in\mathcal I}\) and common support interval \(J\subseteq E\). The \emph{score} is
\begin{equation}\label{def.score}
s_\nu(x)\coloneqq \partial_\nu\log f_\nu(x),\qquad x\in J,
\end{equation}
and is centred under $P_\nu$:
\[
\int_J s_\nu\,\dx[]P_\nu=0.
\]
A \emph{kernel} (with respect to $P_\nu$) is any measurable, \(P_\nu\)-integrable function \(K_\nu:J\to\R\) satisfying
\begin{equation}\label{def.log.companion}
s_\nu(x)=K_\nu(x)-\int_J K_\nu(y)\,\dx[]P_\nu(y),\qquad x\in J.
\end{equation}
Since the centring term \(\int_J K_\nu\,\dx[]P_\nu\) is a constant independent of $x$, \(K_\nu\) and \(s_\nu\) have the same properties on \(J\) whenever those properties are invariant under additive constants. The score itself is the unique centred kernel. Kernels are determined only up to addition of a function of the parameter. We state the ordering criteria in kernel form and pass to the score whenever a density-level proof is more convenient.
\begin{rem}
Suppose the density can be expressed as $f_\nu(x)={w_\nu(x)}/{A(\nu)}$, where $A(\nu)\coloneqq \int_J w_\nu(y)\,\mu(\dx[y])$. Then
	\begin{equation}\label{def.kernel}
		K_\nu(x)\coloneqq \partial_\nu\log w_\nu(x),\qquad x\in J,
	\end{equation}
	is a kernel. Indeed, differentiating \(A(\nu)=\int_J w_\nu\,\dx[\mu]\) under the integral gives \(\partial_\nu\log A(\nu)=\int_J K_\nu\,\dx[]P_\nu\), hence
	\[
	s_\nu(x) =\partial_\nu\log w_\nu(x)-\partial_\nu\log A(\nu) =K_\nu(x)-\int_J K_\nu\,\dx[]P_\nu.
	\]
\end{rem}

\subsection{Density, survival, and hazard identities}\label{subsec:density-survival-hazard}

\begin{defn}[Regularity condition]
\label{def:regularity-condition}
A measurable function \(u:J\to[0,\infty)\) satisfies the \emph{regularity condition} if, for every compact interval \(B\subseteq\mathcal I\), the integral \(\int_J u\,\dx[]P_\nu\) is finite and positive for every \(\nu\in B\), and there exists a majorant \(m\in L^1(J,\mu)\) such that
\[
\sup_{\nu\in B}\bigl|u(x)\,\partial_\nu f_\nu(x)\bigr|\le m(x)
\qquad \mu\text{-a.e. } x\in J.
\]
\end{defn}

By Assumption~\ref{asm:diff-under-integral}, every bounded measurable \(u\) satisfies the majorant condition of Definition~\ref{def:regularity-condition}. Thus, tail indicators \(u_y(x)=\one\{x\ge y\}\) satisfy the regularity condition whenever \(\bar F_\nu(y)>0\) for every \(\nu\) in the compact parameter interval under consideration.
\begin{prop}
\label{prop:weighted-identity}
Suppose \(u:J\to[0,\infty)\) satisfies the regularity condition, and write \(P_\nu^u\) for the probability measure defined by
\[
\frac{\dx[]P_\nu^u}{\dx[]P_\nu}(x)=\frac{u(x)}{\int_J u\,\dx[] P_\nu },\qquad x\in J.
\]
Then
\begin{equation}\label{eq:weighted-mean}
\partial_\nu\log\int_J u\,\dx[]P_\nu
=\int_J s_\nu\,\dx[]P_\nu^u
=\int_J K_\nu\,\dx[]P_\nu^u-\int_J K_\nu\,\dx[]P_\nu.
\end{equation}
If \(v:J\to[0,\infty)\) also satisfies the same regularity condition, and \(P_\nu^v\) is defined analogously, then
\[
\partial_\nu\log\dfrac{\displaystyle\int_J u\,\dx[]P_\nu}{\displaystyle\int_J v\,\dx[]P_\nu}
=\int_J s_\nu\,\dx[]P_\nu^u-\int_J s_\nu\,\dx[]P_\nu^v
=\int_J K_\nu\,\dx[]P_\nu^u-\int_J K_\nu\,\dx[]P_\nu^v.
\]
\end{prop}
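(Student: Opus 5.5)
The plan is to differentiate \(U(\nu)\coloneqq\int_J u\,\dx[]P_\nu=\int_J u(x)f_\nu(x)\,\mu(\dx[x])\) under the integral sign, rewrite the derivative through the score, and then substitute the kernel representation~\eqref{def.log.companion}. The ratio statement will follow by subtracting two instances of~\eqref{eq:weighted-mean}.

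\emph{Step 1 (differentiability of \(U\)).} Fix \(\nu\) in the interior of \(\mathcal I\), or at an endpoint if one-sided derivatives are intended, and choose a compact interval \(B\subseteq\mathcal I\) containing a neighbourhood of \(\nu\) in \(\mathcal I\). By Assumption~\ref{asm:diff-under-integral}, the map \(\nu\mapsto u(x)f_\nu(x)\) is \(C^1\) for every \(x\in J\), since \(f_\nu=\exp(\log f_\nu)\). The regularity condition gives an integrable majorant \(m\) for \(|u\,\partial_\nu f_\nu|\), uniform over \(\nu\in B\). For any \(h\) small, the mean value theorem shows that the difference quotient \(h^{-1}\bigl(u f_{\nu+h}-u f_\nu\bigr)\) is dominated by \(m\). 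Dominated convergence then yields
\[
U'(\nu)=\int_J u\,\partial_\nu f_\nu\,\dx[\mu].
\]
This is the only step requiring care, and it is exactly what the majorant in Definition~\ref{def:regularity-condition} is designed to supply.

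\emph{Step 2 (score form).} On \(J\) we have \(f_\nu>0\), so \(\partial_\nu f_\nu=s_\nu f_\nu\) and hence \(U'(\nu)=\int_J u\,s_\nu\,\dx[]P_\nu\). The regularity condition gives \(0<U(\nu)<\infty\), so \(\log U\) is differentiable and
\[
\partial_\nu\log U(\nu)=\frac{U'(\nu)}{U(\nu)}=\int_J s_\nu\,\frac{u}{U(\nu)}\,\dx[]P_\nu=\int_J s_\nu\,\dx[]P_\nu^u .
\]
Integrability of \(s_\nu\) under \(P_\nu^u\) is inherited from Step 1, since \(|u s_\nu f_\nu|\le m\).

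\emph{Step 3 (kernel form).} Substitute \(s_\nu=K_\nu-c_\nu\), where \(c_\nu\coloneqq\int_J K_\nu\,\dx[]P_\nu\) is a finite constant in \(x\). Because \(P_\nu^u\) is a probability measure, this gives \(\int_J s_\nu\,\dx[]P_\nu^u=\int_J K_\nu\,\dx[]P_\nu^u-c_\nu\). Here \(K_\nu\) is \(P_\nu^u\)-integrable, since it differs from the integrable \(s_\nu\) by a constant.

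\emph{Step 4 (ratio identity).} Apply Steps 1--3 to \(v\) as well. Then \(\partial_\nu\log(U/V)=\partial_\nu\log U-\partial_\nu\log V\). In the score form the two expressions subtract directly. In the kernel form the common centring terms \(c_\nu\) cancel, leaving the stated difference of \(K_\nu\)-means.
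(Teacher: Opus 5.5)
Your proposal is correct and follows the paper's proof: you differentiate $\int_J u f_\nu \, d\mu$ under the integral sign via the regularity majorant, write $\partial_\nu f_\nu = s_\nu f_\nu$, divide by the normalising integral, substitute the kernel representation, and subtract the $u$ and $v$ instances. Your mean-value/dominated-convergence argument and the integrability checks for $s_\nu$ and $K_\nu$ under $P_\nu^u$ spell out what the paper covers only with the phrase that the regularity condition justifies differentiating under the integral sign.
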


\begin{proof}
Since the regularity condition on \(u\) justifies differentiating under the integral sign, it follows that 
\[
\partial_\nu\int_J u\,\dx[]P_\nu
=\int_J u(x)\,\partial_\nu f_\nu(x)\,\mu(\dx)
=\int_J u(x)\,s_\nu(x)\,\dx[]P_\nu.
\]
Dividing by \(\int_J u\,\dx[]P_\nu\) yields
\[
\partial_\nu\log\int_J u\,\dx[]P_\nu
=\frac{\int_J u(x)\,s_\nu(x)\,\dx[]P_\nu}
      {\int_J u\,\dx[]P_\nu}
=\int_J s_\nu\,\dx[]P_\nu^u.
\]
Substituting~\eqref{def.log.companion} gives the kernel form. The general case follows by applying~\eqref{eq:weighted-mean} to \(u\) and
\(v\) and taking the difference.
\end{proof}

\begin{lem}
\label{lem:three-levels}
Let $X$ be a random variable with law $P_\nu$. For every \(x\in J\) with \(\bar F_\nu(x)>0\),
\begin{alignat}{2}
\partial_\nu\log f_\nu(x) &=s_\nu(x)=K_\nu(x)-\E[K_\nu(X)], \label{eq:lr-kernel}\\
\partial_\nu\log\bar F_\nu(x) &=\E[s_\nu(X)\mid X\ge x]=\E[K_\nu(X)\mid X\ge x]-\E[K_\nu(X)], \label{eq:st-kernel}\\
\partial_\nu\log h_\nu(x) &=s_\nu(x)-\E[s_\nu(X)\mid X\ge x]=K_\nu(x)-\E[K_\nu(X)\mid X\ge x]. \label{eq:hr-kernel}
\end{alignat}
\end{lem}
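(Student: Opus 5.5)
The plan is to derive all three identities from Proposition~\ref{prop:weighted-identity}, applied with suitable weight functions, together with the definition~\eqref{def.log.companion} of a kernel.

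For~\eqref{eq:lr-kernel}, nothing beyond definitions is needed. The first equality is the definition~\eqref{def.score} of the score. The second is~\eqref{def.log.companion}, rewritten with \(\int_J K_\nu\,\dx[]P_\nu=\E[K_\nu(X)]\).

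For~\eqref{eq:st-kernel}, fix \(x\in J\) with \(\bar F_\nu(x)>0\) and take the tail indicator \(u_x(y)\coloneqq\one\{y\ge x\}\). Then \(\int_J u_x\,\dx[]P_\nu=\bar F_\nu(x)\), and \(P^{u_x}_\nu\) is the conditional law of \(X\) given \(X\ge x\). The remark after Definition~\ref{def:regularity-condition} gives the majorant condition, since \(u_x\) is bounded. The positivity requirement asks that \(\bar F_{\nu'}(x)>0\) for all \(\nu'\) in a compact interval around \(\nu\). Because \(f_{\nu'}>0\) on the common interval \(J\) and \(x\in J\), we have \(\bar F_{\nu'}(x)\ge P_{\nu'}(\{x\})\) in the discrete case and \(\bar F_{\nu'}(x)>0\) in the continuous case, unless \(x\) is the right endpoint of \(J\), which is a null set. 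In either case the tail has positive \(\mu\)-mass inside \(J\), so positivity holds for every \(\nu'\). This positivity check is the only genuinely delicate point, and I would handle it as just described. Applying~\eqref{eq:weighted-mean} then gives
\[
\partial_\nu\log\bar F_\nu(x)=\E[s_\nu(X)\mid X\ge x]=\E[K_\nu(X)\mid X\ge x]-\E[K_\nu(X)].
\]

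For~\eqref{eq:hr-kernel}, write \(\log h_\nu(x)=\log f_\nu(x)-\log\bar F_\nu(x)\) and differentiate. Subtracting~\eqref{eq:st-kernel} from~\eqref{eq:lr-kernel} gives the score form. In the kernel form the centring terms \(\E[K_\nu(X)]\) cancel, leaving \(K_\nu(x)-\E[K_\nu(X)\mid X\ge x]\). Alternatively, in the discrete case one can apply the two-weight form of Proposition~\ref{prop:weighted-identity} with \(u=\one\{\cdot=x\}\) and \(v=u_x\), which gives the same result directly.
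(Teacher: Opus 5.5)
Your proposal is correct and follows the paper's proof essentially verbatim. The density identity comes from the definitions, the survival identity from Proposition~\ref{prop:weighted-identity} with the tail indicator \(u_x=\one\{\cdot\ge x\}\), and the hazard identity from subtracting~\eqref{eq:st-kernel} from~\eqref{eq:lr-kernel}. Your explicit check that \(\bar F_{\nu'}(x)>0\) for all \(\nu'\), which the paper leaves implicit, is a useful addition, and it can be stated uniformly: since \(f_{\nu'}>0\) on \(J\), positivity of \(\bar F_{\nu'}(x)\) is equivalent to \(\mu([x,\infty)\cap J)>0\), which does not depend on \(\nu'\), so it transfers from \(\nu\) to every \(\nu'\).
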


\begin{proof}
The density-level identity corresponds to definitions \eqref{def.score} and \eqref{def.log.companion} expressed through $X$. For the survival identity, take \(u(y)=\one\{y\ge x\}\) in Proposition~\ref{prop:weighted-identity}. Then $\int_J u\,\dx[]P_\nu=\bar F_\nu(x),$
and the corresponding new law is the conditional law of \(X\) given \(X\ge x\). Hence $\partial_\nu\log\bar F_\nu(x) =\E[s_\nu(X)\mid X\ge x].$ Substituting \(s_\nu\) with its expression in \eqref{eq:lr-kernel} gives the right-hand side of \eqref{eq:st-kernel}. 
Finally, subtract~\eqref{eq:st-kernel} from~\eqref{eq:lr-kernel} to get \eqref{eq:hr-kernel}. This completes the proof.
\end{proof}

Lemma~\ref{lem:three-levels} expresses each of the density, survival, and hazard derivatives as a difference between two of the three quantities \(K_\nu(x)\), \(\E[K_\nu(X)\mid X\ge x]\), and \(\E[K_\nu(X)]\). Integrating these local identities over a parameter interval gives the pairwise comparison functions
\[
L_{\nu_2,\nu_1}(x) \coloneqq \log\frac{f_{\nu_2}(x)}{f_{\nu_1}(x)},\quad T_{\nu_2,\nu_1}(x) \coloneqq \log\frac{\bar F_{\nu_2}(x)}{\bar F_{\nu_1}(x)}, \quad H_{\nu_2,\nu_1}(x) \coloneqq \log\frac{h_{\nu_2}(x)}{h_{\nu_1}(x)},
\]
defined wherever the corresponding denominators are positive. In particular, for $\nu_1<\nu_2$ in \(\mathcal I\) and $x\in J$, the log-likelihood ratio is given by
\begin{equation}\label{eq:pair-density-int}
L_{\nu_2,\nu_1}(x) =\int_{\nu_1}^{\nu_2}s_\nu(x)\,\dx[\nu] =\int_{\nu_1}^{\nu_2} \bigl(K_\nu(x)-\E[K_\nu(X)]\bigr)\,\dx[\nu],
\end{equation}
with analogous representations of $T_{\nu_2,\nu_1}$ and $H_{\nu_2,\nu_1}$ via the kernel and its tail-conditional means. These representations lead to the ordering criteria of the next subsection.

\subsection{Comparing laws in parametric families}\label{subsec:one-parameter-criteria}

\begin{thm}
\label{thm:four-criteria}
\leavevmode
\begin{enumerate}[label=\textup{(\roman*)}]
\item \(P_{\nu_1}\lr P_{\nu_2}\) for all \(\nu_1\le\nu_2\) in \(\mathcal I\) if and only if \(K_\nu\) is nondecreasing on \(J\) for every \(\nu\in\mathcal I\).

\item \(P_{\nu_2}\lc P_{\nu_1}\) for all \(\nu_1\le\nu_2\) in \(\mathcal I\) if and only if \(K_\nu\) is concave on \(J\) for every \(\nu\in\mathcal I\).

\item \(P_{\nu_1}\st P_{\nu_2}\) for all \(\nu_1\le\nu_2\) in \(\mathcal I\) if and only if \(\E[K_\nu(X)\mid X\ge x]\ge\E[K_\nu(X)]\) for every \(\nu\in\mathcal I\) and \(x\in J\) with \(\bar F_\nu(x)>0\).

\item \(P_{\nu_1}\hr P_{\nu_2}\) for all \(\nu_1\le\nu_2\) in \(\mathcal I\) if and only if \(K_\nu(x)\le\E[K_\nu(X)\mid X\ge x]\) for every \(\nu\in\mathcal I\) and \(x\in J\) with \(\bar F_\nu(x)>0\).
\end{enumerate}
\end{thm}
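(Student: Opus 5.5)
The plan is to prove all four equivalences in the same way. Integrating the local identities of Lemma~\ref{lem:three-levels} over \([\nu_1,\nu_2]\) gives sufficiency. Differentiating the pairwise comparison functions at \(\nu_2\downarrow\nu_1\) gives necessity, using the continuity supplied by Assumption~\ref{asm:diff-under-integral}.

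\textbf{Items (i) and (ii).} For these I will work with \(L_{\nu_2,\nu_1}\) via~\eqref{eq:pair-density-int}. Since \(f_\nu>0\) on the common interval \(J\), the likelihood ratio of \(P_{\nu_1}\) to \(P_{\nu_2}\) equals \(\exp(-L_{\nu_2,\nu_1})\) on \(J\). Hence \(P_{\nu_1}\lr P_{\nu_2}\) is equivalent to \(x\mapsto L_{\nu_2,\nu_1}(x)\) being nondecreasing on \(J\). Likewise, \(P_{\nu_2}\lc P_{\nu_1}\) (the supports coincide) is equivalent to \(L_{\nu_2,\nu_1}\) being concave on \(J\).

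For sufficiency, write
\[
L_{\nu_2,\nu_1}(x)=\int_{\nu_1}^{\nu_2}K_\nu(x)\,\dx[\nu]-c(\nu_1,\nu_2),
\]
where \(c\) does not depend on \(x\). Monotonicity and concavity are preserved by integrating over \(\nu\) and by adding constants, so both hold for \(L_{\nu_2,\nu_1}\).

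For necessity, fix \(x<x'\) in \(J\) (for concavity, fix three points with a convex combination). The hypothesis gives
\[
\frac{L_{\nu+h,\nu}(x')-L_{\nu+h,\nu}(x)}{h}\ge 0 \quad\text{for } h>0.
\]
Letting \(h\downarrow0\) gives \(s_\nu(x')-s_\nu(x)\ge0\), because \(\nu\mapsto\log f_\nu(x)\) is \(C^1\). The same limit argument applied to the concavity inequality gives concavity of \(s_\nu\). These properties then transfer to \(K_\nu\), since \(K_\nu\) and \(s_\nu\) differ by a constant in \(x\).

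\textbf{Items (iii) and (iv).} Here I use \(T_{\nu_2,\nu_1}\) and \(H_{\nu_2,\nu_1}\) together with~\eqref{eq:st-kernel} and~\eqref{eq:hr-kernel}.

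For (iii), \(P_{\nu_1}\st P_{\nu_2}\) means \(T_{\nu_2,\nu_1}(x)\ge0\) for all \(x\) with positive survival. On \(J\), survival positivity is common to all \(\nu\), since the densities share the support \(J\). Points outside \(J\) are trivial: there both survival functions are \(1\) or both are \(0\), and \(J\) is an interval.

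For (iv), the hazard-rate order is equivalent to \(H_{\nu_2,\nu_1}(x)\le0\) wherever the hazards are defined. The definition of \(\hr\) states this equivalence directly in terms of hazard rates.

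Sufficiency follows by integrating the sign conditions:
\[
T_{\nu_2,\nu_1}(x)=\int_{\nu_1}^{\nu_2}\bigl(\E[K_\nu\mid X\ge x]-\E K_\nu\bigr)\,\dx[\nu]\ge0,
\]
and analogously \(H_{\nu_2,\nu_1}(x)\le0\). This integration is legitimate because \(\nu\mapsto\log\bar F_\nu(x)\) is \(C^1\) by Proposition~\ref{prop:weighted-identity}, whose regularity condition is met by the tail indicators as remarked after Definition~\ref{def:regularity-condition}.

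Necessity again uses a one-sided difference quotient. From \(T_{\nu+h,\nu}(x)/h\ge0\), letting \(h\downarrow0\) gives \(\partial_\nu\log\bar F_\nu(x)\ge0\), which by~\eqref{eq:st-kernel} is the stated inequality. The hazard case is the same, using~\eqref{eq:hr-kernel}.

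\textbf{Main obstacle.} I expect the main obstacle to be the bookkeeping in (iv). One issue is the discrete case, where \(h_\nu=f_\nu/\bar F_\nu\) with \(\bar F\) including the point \(x\), so one must check that \(\log h_\nu\) is differentiable. This follows by combining the \(C^1\) properties of \(\log f_\nu\) and \(\log\bar F_\nu\). The other issue is making sure that the pointwise equivalence between \(\hr\) and the hazard inequality holds on all of \(J\), including its right endpoint, where \(\bar F\) may equal \(f\).
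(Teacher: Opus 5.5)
Your proposal is correct and follows the same structure as the paper's proof: sufficiency comes from integrating the identities of Lemma~\ref{lem:three-levels} over \([\nu_1,\nu_2]\), and necessity comes from localising the pairwise comparison functions at a single parameter value. The only difference is in the necessity step. You take the limit of one-sided difference quotients directly, using the pair \(\nu-h,\nu\) when \(\nu\) is a right endpoint of \(\mathcal I\), which needs only differentiability in \(\nu\) at that point. The paper instead argues by contradiction, using continuity of \(\nu\mapsto s_\nu(x)\) and \(\nu\mapsto\partial_\nu\log\bar F_\nu(x)\) to get a uniform \(\varepsilon\) on a small parameter interval.
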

\begin{proof}
Sufficiency of \textup{(i)} and \textup{(ii)} follows from~\eqref{eq:pair-density-int}: the centring term is irrelevant, and an integral over \(\nu\) of nondecreasing or concave functions of \(x\) is again nondecreasing or concave. For \textup{(iii)} and \textup{(iv)}, integrate~\eqref{eq:st-kernel} or~\eqref{eq:hr-kernel} over $[\nu_1,\nu_2]$ and read off the sign.

For necessity,  identity~\eqref{def.log.companion} gives
\begin{equation}
\label{eq:kernel-score-shape}
K_\nu(x_1)-K_\nu(x_2)=s_\nu(x_1)-s_\nu(x_2),\qquad x_1,x_2\in J,
\end{equation}
and analogously for second differences and tail-conditional comparisons:
\begin{equation}
\label{eq:kernel-score-tail}
\E[K_\nu(X)\mid X\ge x]-\E[K_\nu(X)] =\E[s_\nu(X)\mid X\ge x]
\end{equation}
on \(\{x:\bar F_\nu(x)>0\}\), where the right-hand side equals \(\partial_\nu\log\bar F_\nu(x)\) by Lemma~\ref{lem:three-levels}. Both right-hand sides of (\ref{eq:kernel-score-shape}) and (\ref{eq:kernel-score-tail}) are continuous in \(\nu\) at every fixed \(x\) by Assumption~\ref{asm:diff-under-integral}. The kernel expressions on the left are therefore also continuous in \(\nu\), independently of the choice of $K_\nu$. The rest of the proof proceeds by contradiction, as follows:

\emph{Assertion~\textup{(i)}.} Suppose \(P_{\nu_1}\lr P_{\nu_2}\) for all \(\nu_1\le\nu_2\), and assume by contradiction that for some \(\nu_0 \in \mathcal I\), \(K_{\nu_0}\) is not nondecreasing on \(J\). By~\eqref{eq:kernel-score-shape}, this means $s_{\nu_0}$ is not nondecreasing, so there exist \(x_1<x_2\) in \(J\) such that \(s_{\nu_0}(x_1)>s_{\nu_0}(x_2)\). By continuity of $\nu\mapsto s_\nu(x_i)$, there is an \(\varepsilon>0\) and an interval $I_\varepsilon\subseteq\mathcal I$ containing $\nu_0$ such that $s_\nu(x_1)-s_\nu(x_2)\ge\varepsilon$ for all $\nu\in I_\varepsilon$. Choosing \(\nu_1<\nu_2\) in \(I_\varepsilon\) gives $L_{\nu_2,\nu_1}(x_1)-L_{\nu_2,\nu_1}(x_2)
=\int_{\nu_1}^{\nu_2}\bigl(s_\nu(x_1)-s_\nu(x_2)\bigr)\,\dx[\nu]
\ge \varepsilon(\nu_2-\nu_1)>0,
$
contradicting \(P_{\nu_1}\lr P_{\nu_2}\).

\emph{Assertion~\textup{(ii)}.} Suppose \(P_{\nu_2}\lc P_{\nu_1}\) for all \(\nu_1\le\nu_2\), and assume by contradiction that \(K_{\nu_0}\) is not concave on \(J\) for some \(\nu_0 \in \mathcal I\). Then $s_{\nu_0}$ is not concave by~\eqref{eq:kernel-score-shape} applied to second differences. In the continuous case, there exist \(x_1<x_2<x_3\) in \(J\) and \(\lambda\in(0,1)\) satisfying \(x_2=\lambda x_1+(1-\lambda)x_3\) such that
\(s_{\nu_0}(x_2) <\lambda s_{\nu_0}(x_1)+(1-\lambda)s_{\nu_0}(x_3).\)
In the discrete case, the same display holds for an admissible triplet with \(\lambda=1/2\).
By continuity in \(\nu\), there is an \(\varepsilon>0\) and an interval $I_\varepsilon$ containing $\nu_0$ such that \(\lambda s_\nu(x_1)+(1-\lambda)s_\nu(x_3)-s_\nu(x_2)\ge\varepsilon\) for all \(\nu\in I_\varepsilon\). Choosing \(\nu_1<\nu_2\) in \(I_\varepsilon\) yields
\begin{alignat*}{2}
\lambda L_{\nu_2,\nu_1}(x_1) +(1-\lambda)L_{\nu_2,\nu_1}(x_3) -L_{\nu_2,\nu_1}(x_2)
&=\int_{\nu_1}^{\nu_2} \bigl[\lambda s_\nu(x_1)+(1-\lambda)s_\nu(x_3)-s_\nu(x_2)\bigr]\,\dx[\nu]\\
&\ge \varepsilon(\nu_2-\nu_1)>0,
\end{alignat*}
contradicting concavity of \(L_{\nu_2,\nu_1}\) on \(J\).

\emph{Assertion~\textup{(iii)}.} Suppose \(P_{\nu_1}\st P_{\nu_2}\) for all \(\nu_1\le\nu_2\) and assume \(\E[K_{\nu_0}(X)\mid X\ge x_0]-\E[K_{\nu_0}(X)]<0\) at some \((\nu_0,x_0)\) with \(\bar F_{\nu_0}(x_0)>0\). By~\eqref{eq:kernel-score-tail} and Lemma~\ref{lem:three-levels}, this is the value of $\partial_\nu\log\bar F_\nu(x_0)$ at $\nu=\nu_0$, which is continuous in $\nu$. Hence there is an \(\varepsilon>0\) and an interval \(I_\varepsilon\) containing \(\nu_0\) such that $\partial_\nu\log\bar F_\nu(x_0)<-\varepsilon$ for every $\nu\in I_\varepsilon$. Choosing \(\nu_1<\nu_2\) in \(I_\varepsilon\) gives $T_{\nu_2,\nu_1}(x_0)<0$, contradicting \(P_{\nu_1}\st P_{\nu_2}\).

\emph{Assertion~\textup{(iv)}.} Suppose \(P_{\nu_1}\hr P_{\nu_2}\) for all \(\nu_1\le\nu_2\) and assume \(K_{\nu_0}(x_0)-\E[K_{\nu_0}(X)\mid X\ge x_0]>0\) at some \((\nu_0,x_0)\) with \(\bar F_{\nu_0}(x_0)>0\). By~\eqref{def.log.companion} and~\eqref{eq:kernel-score-tail}, this difference equals
\[
s_{\nu_0}(x_0)-\E[s_{\nu_0}(X)\mid X\ge x_0]=\partial_\nu\log h_\nu(x_0)\Big|_{\nu=\nu_0}
\]
by Lemma~\ref{lem:three-levels}, which is continuous in $\nu$. Hence there is an \(\varepsilon>0\) and an interval \(I_\varepsilon\) containing \(\nu_0\) such that $\partial_\nu\log h_\nu(x_0)>\varepsilon$ for every $\nu\in I_\varepsilon$. Choosing \(\nu_1<\nu_2\) in \(I_\varepsilon\) then contradicts $H_{\nu_2,\nu_1}(x_0)\le 0$. This completes the proof.
\end{proof}

The simplest sufficient condition combines part \textup{(i)} of Theorem~\ref{thm:four-criteria} with the standard chain \(\lr\Rightarrow\hr\Rightarrow\st\)~\cite[Theorem 1.C.1]{ShakedShanthikumar}.

\begin{cor}
\label{cor:monotone-kernel}
If \(x\mapsto K_\nu(x)\), equivalently \(x\mapsto s_\nu(x)\), is nondecreasing for every \(\nu\in\mathcal I\), then for all \(\nu_1\le\nu_2\) in \(\mathcal I\),
\[
P_{\nu_1}\lr P_{\nu_2},\qquad
P_{\nu_1}\hr P_{\nu_2},\qquad
P_{\nu_1}\st P_{\nu_2}.
\]
If \(K_\nu\) is nonincreasing for every \(\nu\), the three orders reverse.
\end{cor}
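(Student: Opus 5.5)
The plan is to combine Theorem~\ref{thm:four-criteria}\textup{(i)} with the chain \(\lr\Rightarrow\hr\Rightarrow\st\). First, the equivalence of the two monotonicity hypotheses follows from~\eqref{eq:kernel-score-shape}. For each fixed \(\nu\), \(K_\nu\) and \(s_\nu\) differ by the constant \(\int_J K_\nu\,\dx[]P_\nu\), so one is nondecreasing on \(J\) exactly when the other is.

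Assume \(K_\nu\) is nondecreasing for every \(\nu\in\mathcal I\). The sufficiency direction of Theorem~\ref{thm:four-criteria}\textup{(i)} gives \(P_{\nu_1}\lr P_{\nu_2}\) for all \(\nu_1\le\nu_2\). Concretely, by~\eqref{eq:pair-density-int}, \(L_{\nu_2,\nu_1}\) is an integral over \(\nu\in[\nu_1,\nu_2]\) of nondecreasing functions of \(x\). Hence \(f_{\nu_1}/f_{\nu_2}\) is nonincreasing on the common support \(J\), and this is exactly \(P_{\nu_1}\lr P_{\nu_2}\) in the sense of Definition~\ref{def:orders}. No support issue arises, because both densities are positive on \(J\) by Assumption~\ref{asm:diff-under-integral}.

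For the other two orders I would cite the standard implications \(\lr\Rightarrow\hr\Rightarrow\st\) from~\cite[Theorem 1.C.1]{ShakedShanthikumar}. As a self-contained alternative, I would check the hypotheses of Theorem~\ref{thm:four-criteria}\textup{(iv)} and \textup{(iii)} directly. If \(K_\nu\) is nondecreasing, then on \(\{X\ge x\}\) we have \(K_\nu(X)\ge K_\nu(x)\), so \(K_\nu(x)\le\E[K_\nu(X)\mid X\ge x]\); this is \textup{(iv)}. Next, conditioning on an upper tail raises the mean of a nondecreasing function, so \(\E[K_\nu(X)\mid X\ge x]\ge\E[K_\nu(X)]\); this is \textup{(iii)}. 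The second claim is a Chebyshev/covariance inequality: \(K_\nu\) and \(\one\{X\ge x\}\) are both nondecreasing, so they are nonnegatively correlated.

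For the reversed statement, I would apply the same argument to the reparametrised family \(\tilde P_\eta\coloneqq P_{-\eta}\) with \(\eta\in-\mathcal I\). Its score is \(-s_{-\eta}\), so a valid kernel is \(-K_{-\eta}\), which is nondecreasing. Ordering \(\eta_1=-\nu_2\le\eta_2=-\nu_1\) then gives \(P_{\nu_2}\lr P_{\nu_1}\) and the same for \(\hr\) and \(\st\). Alternatively, replace \(K\) by \(-K\) and swap the roles of \(\nu_1,\nu_2\) in~\eqref{eq:pair-density-int}.

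I do not expect a genuine obstacle. The only point needing care is integrability of \(K_\nu\) under the conditional laws, which is already built into Lemma~\ref{lem:three-levels} via the regularity of the tail indicators.
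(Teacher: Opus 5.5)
Your proposal is correct and follows the paper's own proof: the sufficiency half of Theorem~\ref{thm:four-criteria}\,\textup{(i)} gives \(\lr\), and the standard chain \(\lr\Rightarrow\hr\Rightarrow\st\) from~\cite[Theorem 1.C.1]{ShakedShanthikumar} gives the other two orders. Your direct check of conditions \textup{(iii)}--\textup{(iv)}, and your reparametrisation \(\nu\mapsto-\nu\) for the nonincreasing case, are both valid and spell out steps that the paper leaves implicit.
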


\begin{proof}
Assertion \textup{(i)} of Theorem~\ref{thm:four-criteria} gives \(P_{\nu_1}\lr P_{\nu_2}\). The chain \(\lr\Rightarrow\hr\Rightarrow\st\) extends to the remaining orders.
\end{proof}

\subsection{Beyond monotone and concave kernels}
\label{subsec:beyond-monotone}

Theorem~\ref{thm:four-criteria}\,\textup{(i)} and \textup{(ii)} characterise \(\lr\) and \(\lc\) through monotonicity and concavity of \(K_\nu\) on \(J\). When these global shape properties fail, the tail-conditional criteria \textup{(iii)} and \textup{(iv)} can still yield \(\st\) and \(\hr\). The next results give shape conditions on the score \(s_\nu\) or kernel \(K_\nu\) that imply these tail-conditional inequalities. The superlevel-set, concave-kernel, and unimodal-kernel hypotheses below are kernel-level analogues of pairwise conditions on the likelihood ratio in~\cite{DerbaziPairwise}. The half-Student and zero-inflated examples of Section~\ref{sec:beyond-monotone-applications} illustrate each in turn.

\begin{prop}
\label{prop:kernel-superlevel}
Let \(\nu_1\le\nu_2\) in \(\mathcal I\), and assume \(J\) has finite left endpoint \(x_0\coloneqq \inf J\). If for every \(\nu\in[\nu_1,\nu_2]\), the set \(A_\nu\coloneqq \{x\in J:s_\nu(x)\ge 0\}\) is a nonempty initial interval of \(J\), then
\[
P_{\nu_2}\st P_{\nu_1}.
\]
If, in addition, \(s_\nu\) is nonincreasing on \(J\setminus A_\nu\) for every \(\nu\in[\nu_1,\nu_2]\), then
\[
P_{\nu_2}\hr P_{\nu_1}.
\]
\end{prop}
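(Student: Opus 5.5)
We work with the score \(s_\nu\) directly and use Lemma~\ref{lem:three-levels}. Integrating~\eqref{eq:st-kernel} over \([\nu_1,\nu_2]\) gives \(T_{\nu_2,\nu_1}(x)=\int_{\nu_1}^{\nu_2}\E[s_\nu(X)\mid X\ge x]\,\dx[\nu]\). Likewise, integrating~\eqref{eq:hr-kernel} gives \(H_{\nu_2,\nu_1}(x)=\int_{\nu_1}^{\nu_2}\bigl(s_\nu(x)-\E[s_\nu(X)\mid X\ge x]\bigr)\,\dx[\nu]\). It therefore suffices to show two pointwise inequalities for each fixed \(\nu\in[\nu_1,\nu_2]\) and each \(x\in J\) with \(\bar F_\nu(x)>0\):
\[
g_\nu(x)\coloneqq \E[s_\nu(X)\mid X\ge x]\le 0
\qquad\text{and}\qquad
s_\nu(x)\ge g_\nu(x).
\]
The first gives \(\bar F_{\nu_2}\le\bar F_{\nu_1}\), i.e.\ \(P_{\nu_2}\st P_{\nu_1}\). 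The second gives \(h_{\nu_2}\ge h_{\nu_1}\), i.e.\ \(P_{\nu_2}\hr P_{\nu_1}\).

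For the \(\st\) claim, write \(A_\nu=\{x\in J: x<c_\nu\}\) or \(\{x\le c_\nu\}\), an initial interval. There are two cases. If \(x\in J\setminus A_\nu\), then the whole tail \([x,\infty)\cap J\) lies in \(\{s_\nu<0\}\), because the complement of an initial interval is a final interval. Hence \(g_\nu(x)<0\). If instead \(x\in A_\nu\), use centring, \(\E[s_\nu(X)]=0\), to write
\[
\bar F_\nu(x)\,g_\nu(x)=-\E\bigl[s_\nu(X)\one\{X<x\}\bigr].
\]
Since \(\{X<x\}\subseteq A_\nu\), where \(s_\nu\ge0\), the right-hand side is \(\le0\). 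The finite left endpoint \(x_0\) guarantees that \(\bar F_\nu(x_0)=1\) and \(g_\nu(x_0)=0\), so the two cases together cover all of \(J\). Thus \(g_\nu\le0\) on \(J\) in both cases.

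For the \(\hr\) claim we again split on \(A_\nu\). If \(x\in A_\nu\), then \(s_\nu(x)\ge0\ge g_\nu(x)\) by the previous step. If \(x\notin A_\nu\), then every \(y\ge x\) in \(J\) also lies in \(J\setminus A_\nu\). Since \(s_\nu\) is nonincreasing there, \(s_\nu(y)\le s_\nu(x)\), and averaging over the conditional tail law yields \(g_\nu(x)\le s_\nu(x)\). Integrating both inequalities over \(\nu\) then finishes the proof.

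The main obstacle is bookkeeping rather than analysis. We must check that \(\bar F_\nu(x)>0\) on a common range for all \(\nu\in[\nu_1,\nu_2]\), which holds since \(f_\nu>0\) on \(J\) by Assumption~\ref{asm:diff-under-integral}. We must also justify the integrated identities along \([\nu_1,\nu_2]\), for which the tail indicators satisfy the regularity condition. In the discrete case, an initial interval of \(J\) should be read in the integer sense, and the tail-complement argument is unchanged.
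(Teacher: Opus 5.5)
Your proof is correct and follows the paper's structure: a pointwise bound \(\E[s_\nu(X)\mid X\ge x]\le 0\), a case split on \(A_\nu\) for the hazard inequality, and integration of the identities of Lemma~\ref{lem:three-levels} over \([\nu_1,\nu_2]\). The one difference is that you prove the one-sign-change tail bound directly, via the centring identity \(\bar F_\nu(x)\,g_\nu(x)=-\E[s_\nu(X)\one\{X<x\}]\), where the paper cites Lemma~2.6 of~\cite{DerbaziPairwise}; your remark about \(x_0\) is unnecessary, since the dichotomy \(x\in A_\nu\) or \(x\notin A_\nu\) already covers all of \(J\).
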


\begin{proof}
Fix \(\nu\in[\nu_1,\nu_2]\). Since \(A_\nu\) is a nonempty initial interval of \(J\), its complement \(J\setminus A_\nu\) is a final interval. Thus, \(s_\nu\) is nonnegative on \(A_\nu\) and negative on its complement. Consequently, \(s_\nu\) has at most one sign change on \(J\), from nonnegative to negative. Since \(\E[s_\nu(X)]=0\) for \(X\sim P_\nu\), Lemma~2.6 of~\cite{DerbaziPairwise}, applied with \(\phi=s_\nu\) under \(P_\nu\), gives
\[
\E[s_\nu(X)\mid X\ge x]\le 0
\]
for every \(x\in J\) with \(\bar F_\nu(x)>0\).

Now, by Lemma~\ref{lem:three-levels}, the left-hand side equals \(\partial_\nu\log\bar F_\nu(x)\). Integrating over \([\nu_1,\nu_2]\) gives \(\bar F_{\nu_2}(x)\le\bar F_{\nu_1}(x)\), which is \(P_{\nu_2}\st P_{\nu_1}\).

For the hazard-rate part, we need to show that \(s_\nu(x)\ge \E[s_\nu(X)\mid X\ge x]\) holds pointwise on \(\{x\in J:\bar F_\nu(x)>0\}\). If \(x\in A_\nu\), then the tail-mean bound proved in the first part gives \(\E[s_\nu(X)\mid X\ge x]\le0\le s_\nu(x)\). If \(x\in J\setminus A_\nu\), then every \(u\ge x\) also lies in \(J\setminus A_\nu\), and the right-tail monotonicity hypothesis gives \(s_\nu(u)\le s_\nu(x)\). In either case, \(\E[s_\nu(X)\mid X\ge x]\le s_\nu(x)\). By Lemma~\ref{lem:three-levels}, the difference equals \(\partial_\nu\log h_\nu(x)\), and integrating over \([\nu_1,\nu_2]\) gives \(h_{\nu_2}(x)\ge h_{\nu_1}(x)\), which is \(P_{\nu_2}\hr P_{\nu_1}\), as required.
\end{proof}

The superlevel-set hypothesis is the natural shape condition on \(s_\nu\) at fixed \(\nu\). Two corollaries follow, recovering kernel-level versions of the endpoint reductions of~\cite{DerbaziPairwise} under relative log-concavity and under unimodality of the likelihood ratio.

\begin{cor}
\label{cor:concave-kernel}
Let \(\nu_1\le\nu_2\) in \(\mathcal I\). Suppose \(K_\nu\) is concave on \(J\) and that \(J\) has finite left endpoint \(x_0\coloneqq \inf J\). If \(s_\nu(x_0)\ge 0\) for every \(\nu\in[\nu_1,\nu_2]\), then \(P_{\nu_2}\st P_{\nu_1}\) and \(P_{\nu_2}\hr P_{\nu_1}\).
\end{cor}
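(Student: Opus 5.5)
The plan is to reduce the statement to Proposition~\ref{prop:kernel-superlevel}: for each \(\nu\in[\nu_1,\nu_2]\) I will check that \(A_\nu=\{x\in J:s_\nu(x)\ge0\}\) is a nonempty initial interval of \(J\), and that \(s_\nu\) is nonincreasing on \(J\setminus A_\nu\). Both conclusions, \(P_{\nu_2}\st P_{\nu_1}\) and \(P_{\nu_2}\hr P_{\nu_1}\), then follow directly from that proposition.

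Fix \(\nu\in[\nu_1,\nu_2]\). By~\eqref{def.log.companion}, \(s_\nu\) differs from \(K_\nu\) by a constant, so \(s_\nu\) is concave on \(J\). I will also assume, as the statement tacitly requires, that \(x_0\in J\) so that \(s_\nu(x_0)\) is defined; this is automatic in the discrete case. Nonemptiness of \(A_\nu\) is then immediate from \(s_\nu(x_0)\ge0\).

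For the interval structure, the first step is to show that \(A_\nu\) is convex, since it is a superlevel set of a concave function. It contains \(x_0=\inf J\), so it is an initial interval. In the discrete case I will use that a superlevel set of a sequence with \(\Delta^2 s_\nu\le0\) is an integer interval, because a concave sequence is unimodal.

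For the tail monotonicity, the key fact is that a concave function on an interval, once it has decreased, keeps decreasing. Take \(x\in J\setminus A_\nu\), so \(s_\nu(x)<0\le s_\nu(x_0)\) and \(x>x_0\). For any \(x<u\) in \(J\), the slope inequality for concave functions gives
\[
\frac{s_\nu(u)-s_\nu(x)}{u-x}\le\frac{s_\nu(x)-s_\nu(x_0)}{x-x_0}<0 .
\]
Hence \(s_\nu\) is nonincreasing (indeed decreasing) on the final interval \(J\setminus A_\nu\). In the discrete case the same conclusion comes from the forward differences \(\Delta s_\nu\) being nonincreasing: some \(\Delta s_\nu(k)<0\) occurs before \(x\), so all later differences are negative.

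I do not expect a real obstacle. The only delicate point is the endpoint convention: making sure \(s_\nu(x_0)\) is meaningful, by having \(x_0\in J\) or interpreting it as a right limit, and that concavity extends up to \(x_0\). If \(x_0\notin J\), I would read the hypothesis as \(\lim_{x\downarrow x_0}s_\nu(x)\ge0\) and run the same slope argument with a point near \(x_0\). Once the two hypotheses of Proposition~\ref{prop:kernel-superlevel} are verified for every \(\nu\in[\nu_1,\nu_2]\), the corollary follows.
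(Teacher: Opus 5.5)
Your proposal is correct and follows the paper's proof. Both arguments pass from concavity of \(K_\nu\) to concavity of \(s_\nu\), note that the superlevel set \(A_\nu\) is an interval containing \(x_0\) and hence a nonempty initial interval, deduce that \(s_\nu\) is nonincreasing on \(J\setminus A_\nu\), and then invoke Proposition~\ref{prop:kernel-superlevel}. Your chord-slope argument and your remark on the endpoint convention when \(x_0\notin J\) make explicit steps that the paper states in a single line.
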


\begin{proof}
By definition, \(s_\nu\) inherits concavity from \(K_\nu\) and is centred under \(P_\nu\). A superlevel set of a concave function on an interval is an interval. Since \(s_\nu(x_0)\ge0\), the set \(A_\nu=\{s_\nu\ge 0\}\) is therefore a nonempty initial interval of \(J\). Concavity then implies that \(s_\nu\) is nonincreasing on \(J\setminus A_\nu\). Invoking Proposition~\ref{prop:kernel-superlevel} concludes the proof.
\end{proof}

\begin{cor}
\label{cor:unimodal-kernel}
Let \(\nu_1\le\nu_2\) in \(\mathcal I\), and suppose \(J\) has finite left endpoint \(x_0\coloneqq \inf J\). Suppose further that there exists a single point \(c\in J\), independent of \(\nu\), such that for every \(\nu\in[\nu_1,\nu_2]\):
\begin{enumerate}[label=\textup{(\roman*)}]
  \item \(K_\nu\) is nondecreasing on \(J\cap(-\infty,c]\).
  \item \(K_\nu\) is nonincreasing on \(J\cap[c,\infty)\).
  \item \(s_\nu(x_0)\ge 0\).
\end{enumerate}
Then \(P_{\nu_2}\st P_{\nu_1}\) and \(P_{\nu_2}\hr P_{\nu_1}\).
\end{cor}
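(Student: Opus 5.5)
The plan is to reduce Corollary~\ref{cor:unimodal-kernel} to Proposition~\ref{prop:kernel-superlevel}, as was done for Corollary~\ref{cor:concave-kernel}. Fix \(\nu\in[\nu_1,\nu_2]\). By~\eqref{eq:kernel-score-shape}, \(s_\nu\) differs from \(K_\nu\) by a constant in \(x\). Hypotheses (i)--(ii) therefore transfer verbatim to \(s_\nu\): it is nondecreasing on \(J\cap(-\infty,c]\) and nonincreasing on \(J\cap[c,\infty)\). It then suffices to verify two things. First, \(A_\nu=\{s_\nu\ge0\}\) is a nonempty initial interval of \(J\). Second, \(s_\nu\) is nonincreasing on \(J\setminus A_\nu\). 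Proposition~\ref{prop:kernel-superlevel} then yields both \(P_{\nu_2}\st P_{\nu_1}\) and \(P_{\nu_2}\hr P_{\nu_1}\).

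\emph{Step 1: \(A_\nu\) is a nonempty initial interval.} By (iii), \(x_0\in A_\nu\), so \(A_\nu\) is nonempty. On \(J\cap(-\infty,c]\), the score is nondecreasing and starts at \(s_\nu(x_0)\ge0\). Hence \(s_\nu\ge0\) on all of \(J\cap[x_0,c]\), so this whole initial segment lies in \(A_\nu\). On \(J\cap[c,\infty)\), \(s_\nu\) is nonincreasing, so \(\{x\ge c: s_\nu(x)\ge0\}\) is an initial piece \(J\cap[c,d\rangle\) of that ray, with the endpoint possibly included or excluded. Concatenating the two pieces shows that \(A_\nu\) is an initial interval of \(J\). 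In the discrete case the same argument works with integer intervals.

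\emph{Step 2: monotonicity off \(A_\nu\).} By Step 1, \(J\setminus A_\nu\subseteq J\cap(c,\infty)\), where \(s_\nu\) is nonincreasing by (ii). This gives the additional hypothesis of Proposition~\ref{prop:kernel-superlevel}.

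I expect no real obstacle; the argument is bookkeeping. The only care needed is at the junction point \(c\): (i) and (ii) must be read with \(c\) included in both closed half-lines, so that monotonicity on each side glues at \(c\). One also needs \(x_0\in J\), i.e.\ the left endpoint is attained, so that condition (iii) makes sense. If \(x_0\notin J\), I would interpret \(s_\nu(x_0)\) as the limit \(\lim_{x\downarrow x_0}s_\nu(x)\), which exists by monotonicity on \((x_0,c]\). Since \(s_\nu\) is nondecreasing there, \(s_\nu(x)\ge s_\nu(x_0)\ge0\) for every \(x\in J\cap(x_0,c]\), and Step 1 goes through unchanged. Note also that \(c\) being independent of \(\nu\) is not actually needed for this argument, since Proposition~\ref{prop:kernel-superlevel} is applied pointwise in \(\nu\).
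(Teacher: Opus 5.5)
Your proposal is correct and follows the paper's proof essentially step for step. You transfer the two-sided monotonicity from \(K_\nu\) to \(s_\nu\), use \(s_\nu(x_0)\ge0\) to show that \(A_\nu\) is a nonempty initial interval with \(J\setminus A_\nu\) to the right of \(c\), and invoke Proposition~\ref{prop:kernel-superlevel}. Your side remarks also hold, because the proposition's hypotheses are checked separately for each \(\nu\): the mode \(c\) may depend on \(\nu\), and the limit interpretation of \(s_\nu(x_0)\) works when \(x_0\notin J\).
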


\begin{proof}
The score is nondecreasing on \(J\cap(-\infty,c]\) and nonincreasing on \(J\cap[c,\infty)\), because subtracting a constant does not affect either monotonicity property. With \(s_\nu(x_0)\ge 0\) and \(\E[s_\nu]=0\), the set \(A_\nu=\{s_\nu\ge 0\}\) is a nonempty initial interval of \(J\). Therefore, \(s_\nu\) is nonincreasing on \(J\setminus A_\nu\subseteq J\cap[c,\infty)\). Now invoke Proposition~\ref{prop:kernel-superlevel} to complete the proof. 
\end{proof}

\subsection{Compound-law kernels}
\label{subsec:compound-constructions}

The compound laws considered here are discrete, so throughout this subsection we take \(E=\N_0\).
\begin{defn}[Compound laws]
\label{def:compound}
	Let $Q$ and $F$ be probability measures on $\N_0$ and $\N$, respectively.
  
  Define
\begin{equation}
  \label{def.compound-rv}
X=\sum_{i=1}^N J_i
\end{equation}
	where $N$ has law $Q$, and $(J_i)_{i\ge 1}$ is a sequence of i.i.d. random variables with law $F$, each independent of $N$. 
  
  Let $q_n \coloneqq Q(\{n\})$.  Conditioning on $N$ gives
	\[
	\prob(X=k)=\sum_{n\ge0} q_n\,F^{*n}(\{k\}), \quad k\in\N_0, 
	\]
where \(F^{*0}\coloneqq \delta_0\) and \(F^{*n}\coloneqq F^{*(n-1)}*F\) for \(n\ge1\).
\end{defn}

For a family of probability measures \((Q_\nu)_{\nu\in\mathcal I}\) on $\N_0$ and a fixed summand law $F$ on $\N$, write \(C_\nu\) for the law of the corresponding compound sum and \(f_\nu\coloneqq dC_\nu/d\mu\) for its mass function.

\begin{lem}
\label{lem:cond-mean-monotone}
Let $X$ be the compound sum defined in~\eqref{def.compound-rv}, and assume \(F\) is PF$_2$. Then the posterior kernel \((n,k)\mapsto\prob(N=n\mid X=k)\) is TP$_2$ on $\N_0\times\supp(X)$. In particular, the conditional laws of \(N\) given \(X=k\) are stochastically increasing in \(k\), and \(k\mapsto\E[N\mid X=k]\) is nondecreasing on \(\supp(X)\).
\end{lem}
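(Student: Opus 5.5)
Since $\prob(N=n\mid X=k)=q_n F^{*n}(\{k\})/\prob(X=k)$ on $\supp(X)$, and both the factor $q_n$ (depending only on $n$) and the factor $1/\prob(X=k)$ (depending only on $k$) cancel from the TP$_2$ inequality, the posterior kernel is TP$_2$ as soon as the convolution kernel $M(n,k)\coloneqq F^{*n}(\{k\})$ is TP$_2$ on $\N_0\times\N_0$. The plan is to establish exactly this, i.e.\ that for $n\le n'$ the ratio $k\mapsto F^{*n'}(\{k\})/F^{*n}(\{k\})$ is nondecreasing where defined, equivalently $F^{*n}\lr F^{*n'}$ in the sense of Definition~\ref{def:orders}.

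By transitivity of $\lr$ it suffices to treat $n'=n+1$. Write $F^{*(n+1)}=F^{*n}*F$. The key tool is the classical fact that PF$_2$ is closed under convolution (the Cauchy--Binet / basic composition formula of \cite{Karlin1968}): since $F$ is PF$_2$, the kernel $(j,k)\mapsto F(\{k-j\})$ is TP$_2$, and so every $F^{*n}$ is PF$_2$. Then I would show directly that for a PF$_2$ law $G=F^{*n}$ on $\N_0$ and the PF$_2$ law $F$ on $\N$, one has $G\lr G*F$. Concretely, $(G*F)(\{k\})/G(\{k\})=\sum_{j\ge1}F(\{j\})\,G(\{k-j\})/G(\{k\})$, and log-concavity of $G$ (with interval support) gives that each $k\mapsto G(\{k-j\})/G(\{k\})$ is nondecreasing for $j\ge1$ on the support of $G$, with the conventions $a/0=+\infty$ handling $k$ beyond the right end of $\supp G$ (where the ratio is $+\infty$ and stays so, since $\supp(G*F)$ extends to the right). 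A nonnegative combination of nondecreasing functions is nondecreasing, giving the claim. The case $n=0$ ($G=\delta_0$) is trivial since $\delta_0\lr$ anything on $\N_0$. This is the step I expect to need the most care: the bookkeeping of supports (boundary zeros of $G(\{k-j\})$ on the left, infinite ratios on the right) rather than any deep inequality; the interval-support clause in PF$_2$ is exactly what makes it work.

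Finally the consequences: a TP$_2$ posterior kernel means that for $k\le k'$ in $\supp(X)$ the law of $N$ given $X=k'$ dominates that given $X=k$ in $\lr$, hence in $\st$ by the chain $\lr\Rightarrow\hr\Rightarrow\st$ of \cite[Theorem 1.C.1]{ShakedShanthikumar}. Applying the $\st$ comparison to the nondecreasing function $n\mapsto n$ yields that $k\mapsto\E[N\mid X=k]$ is nondecreasing (possibly with value $+\infty$, which is harmless as monotonicity is preserved).
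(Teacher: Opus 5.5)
Your proof is correct and follows the paper's route. You reduce TP$_2$ of the posterior to TP$_2$ of $M(n,k)=F^{*n}(\{k\})$ by row and column scaling, then pass from $\lr$ to $\st$ to monotone conditional means. The one difference is that the paper cites \cite{Karlin1968} for TP$_2$ of the convolution powers, whereas you prove it from three ingredients: log-concavity of $F^{*n}$, the elementary fact $G\lr G*F$ for PF$_2$ $G$ and $F$ on $\N$, and transitivity of $\lr$. (Also, since $F$ lives on $\N$, $N\le X$, so your $+\infty$ caveat for $\E[N\mid X=k]$ never arises.)
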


\begin{proof}
Since \(F\) is PF$_2$, its convolution powers $M(n,k)\coloneqq F^{*n}(\{k\})$, $n,k\in\N_0$, form a TP$_2$ kernel~\cite{Karlin1968}. Multiplying the \(n\)th row of $M$ by \(q_n\) preserves the TP$_2$ property, and normalising each column preserves TP$_2$ as well. Hence the posterior kernel is TP$_2$, so the posterior laws of \(N\) given \(X=k\) are increasing in \(k\) in the monotone likelihood-ratio order, and therefore stochastically increasing. Their means are consequently nondecreasing.
\end{proof}

The compound law inherits a kernel from the counting law \(Q_\nu\). The general statement uses only the score of \(Q_\nu\) and produces the compound score. When \(Q_\nu\) is a member of the power-series family or admits a factorisation involving a parameter-dependent function, the same construction transports to a kernel.

\begin{prop}
	\label{prop:compound-score}
	Let \((Q_\nu)_{\nu\in\mathcal I}\) be a family of probability measures on \(\N_0\) with common support \(S\subseteq\N_0\), so that \(q_\nu(n)\coloneqq Q_\nu(\{n\})>0\) for \(n\in S\). Denote the density of the compound law \(C_\nu\) by
	\[
	f_\nu(k)\coloneqq \sum_{n\in S} q_\nu(n)F^{*n}(\{k\}).
	\]
If \(\nu\mapsto q_\nu(n)\) is differentiable on \(\mathcal I\) for every \(n\in S\), then for \(k\) in the common support of \(C_\nu\), the compound score is
	\begin{equation}
		\label{eq:compound-score}
		s_\nu(k)\coloneqq \partial_\nu\log f_\nu(k)=\E[\gamma_\nu(N)\mid X=k],
	\end{equation}
	where \(\gamma_\nu(n)\coloneqq \partial_\nu\log q_\nu(n)\) and the expectation is taken under the joint compound law at the current parameter value.
\end{prop}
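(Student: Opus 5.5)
The proof is a direct computation. Fix \(k\) in the common support of \(C_\nu\), so that \(f_\nu(k)>0\). Differentiate \(f_\nu(k)=\sum_{n\in S}q_\nu(n)F^{*n}(\{k\})\) term by term in \(\nu\); the weights \(F^{*n}(\{k\})\) do not depend on \(\nu\). This gives
\[
\partial_\nu f_\nu(k)=\sum_{n\in S}\partial_\nu q_\nu(n)\,F^{*n}(\{k\})=\sum_{n\in S}\gamma_\nu(n)\,q_\nu(n)\,F^{*n}(\{k\}),
\]
using \(\partial_\nu q_\nu(n)=\gamma_\nu(n)q_\nu(n)\), which is valid because \(q_\nu(n)>0\) on \(S\).

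Next, divide by \(f_\nu(k)\). Under the joint law at parameter \(\nu\), the posterior is
\[
\prob(N=n\mid X=k)=\frac{q_\nu(n)F^{*n}(\{k\})}{f_\nu(k)},
\]
by Bayes' formula and the independence of \(N\) and \((J_i)\). Hence \(\partial_\nu f_\nu(k)/f_\nu(k)=\sum_n\gamma_\nu(n)\prob(N=n\mid X=k)=\E[\gamma_\nu(N)\mid X=k]\), which is~\eqref{eq:compound-score}.

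The only real obstacle is justifying the interchange of \(\partial_\nu\) with the series in \(n\) when \(S\) is infinite.

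\emph{First observation.} Since each \(J_i\ge1\), we have \(F^{*n}(\{k\})=0\) for \(n>k\). So for fixed \(k\) the sum runs only over \(n\in S\cap[0,k]\). It is therefore a finite sum, and termwise differentiation needs nothing beyond the assumed differentiability of each \(\nu\mapsto q_\nu(n)\).

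This also settles two further points. The conditional expectation is a finite average, so it is well defined. And \(\nu\mapsto f_\nu(k)\) is differentiable, so the score \(s_\nu(k)=\partial_\nu\log f_\nu(k)\) exists wherever \(f_\nu(k)>0\).

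I would state explicitly that it is the restriction \(F\) on \(\N\) (no mass at \(0\)) that makes the sum finite. This is why no majorant in the sense of Assumption~\ref{asm:diff-under-integral} is required here. If that assumption is invoked for the compound family, it is then used only later, when the compound score feeds into Theorem~\ref{thm:four-criteria}.
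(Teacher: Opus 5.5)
Your proposal is correct and follows the paper's proof essentially step for step. Both arguments use finiteness of the sum from $F^{*n}(\{k\})=0$ for $n>k$, differentiate term by term via $\partial_\nu q_\nu(n)=\gamma_\nu(n)q_\nu(n)$, and recognise the Bayes posterior weights after dividing by $f_\nu(k)$. Your remark that no majorant is needed, because $F$ puts no mass at $0$, is accurate and matches how the paper justifies the interchange.
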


\begin{proof}
For fixed \(k\), the sum defining \(f_\nu(k)\) is finite, because \(F^{*n}(\{k\})=0\) for \(n>k\). Since the atoms of \(Q_\nu\) are differentiable in \(\nu\), differentiating \(f_\nu(k)\) gives
\[
\partial_\nu f_\nu(k)=\sum_{n\in S}\gamma_\nu(n)q_\nu(n)F^{*n}(\{k\}).
\]
For \(k\) in the support of \(C_\nu\), we have
\[
\prob(N=n\mid X=k)
=\frac{q_\nu(n)F^{*n}(\{k\})}{f_\nu(k)}.
\]
Dividing the derivative identity by \(f_\nu(k)\) yields
\begin{alignat*}{2}
\partial_\nu\log f_\nu(k)=\sum_{n\in S}\gamma_\nu(n)\frac{q_\nu(n)F^{*n}(\{k\})}{f_\nu(k)}
&=\sum_{n\in S}\gamma_\nu(n)\prob(N=n\mid X=k)\\&=\E[\gamma_\nu(N)\mid X=k].
\end{alignat*}
\end{proof}

\begin{cor}
\label{cor:compound-kernel}

Suppose the hypotheses of Proposition~\ref{prop:compound-score} hold and that \(q_\nu(n)=w_n(\nu)/A(\nu)\), where \(w_n(\nu)>0\) for \(n\in S\), \(A(\nu)=\sum_{n\in S}w_n(\nu)\), and \(\partial_\nu A(\nu)=\sum_{n\in S}\partial_\nu w_n(\nu)\). Set \(\mathcal G_\nu(n)\coloneqq \partial_\nu\log w_n(\nu)\). Then
\[
K_\nu(k) \coloneqq \E[\mathcal G_\nu(N) \mid X = k]
\]
is a kernel of $C_\nu$.
\end{cor}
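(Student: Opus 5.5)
We must show that $s_\nu(k)=K_\nu(k)-\E[K_\nu(X)]$, where $X\sim C_\nu$ and $K_\nu$ is the posterior mean of $\mathcal G_\nu(N)$. The plan is to start from Proposition~\ref{prop:compound-score} and compute $\gamma_\nu$ in terms of $\mathcal G_\nu$.

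\textbf{Step 1: express $\gamma_\nu$ through $\mathcal G_\nu$.} Since $\log q_\nu(n)=\log w_n(\nu)-\log A(\nu)$, we have
\[
\gamma_\nu(n)=\mathcal G_\nu(n)-\partial_\nu\log A(\nu).
\]
The hypothesis $\partial_\nu A(\nu)=\sum_{n\in S}\partial_\nu w_n(\nu)$ lets us rewrite the constant as
\[
\partial_\nu\log A(\nu)=\sum_{n\in S}\mathcal G_\nu(n)\,\frac{w_n(\nu)}{A(\nu)}=\E[\mathcal G_\nu(N)],
\]
with $N\sim Q_\nu$. This is the counting-law analogue of the Remark following~\eqref{def.kernel}.

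\textbf{Step 2: insert into the compound score.} By~\eqref{eq:compound-score} and linearity of conditional expectation,
\[
s_\nu(k)=\E[\gamma_\nu(N)\mid X=k]=\E[\mathcal G_\nu(N)\mid X=k]-\E[\mathcal G_\nu(N)]=K_\nu(k)-\E[\mathcal G_\nu(N)].
\]

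\textbf{Step 3: identify the centring term.} By the tower property under the joint law of $(N,X)$ at parameter $\nu$,
\[
\E[K_\nu(X)]=\E\bigl[\E[\mathcal G_\nu(N)\mid X]\bigr]=\E[\mathcal G_\nu(N)].
\]
Combined with Step 2, this gives $s_\nu(k)=K_\nu(k)-\int K_\nu\,\dx[]C_\nu$, which is~\eqref{def.log.companion}.

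\textbf{Integrability.} We also need $K_\nu$ to be $C_\nu$-integrable, and this is where the main care is needed. The identity in Step 1 already presupposes that $\sum_{n}\mathcal G_\nu(n)\,q_\nu(n)$ converges, i.e.\ that $\mathcal G_\nu(N)$ is integrable. Given that, conditional Jensen gives
\[
\E|K_\nu(X)|\le\E|\mathcal G_\nu(N)|<\infty,
\]
which also justifies the tower step.

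To secure integrability of $\mathcal G_\nu(N)$, I would argue as follows. Step 1 gives $\mathcal G_\nu(n)=\gamma_\nu(n)+\partial_\nu\log A(\nu)$, so $\mathcal G_\nu(N)$ is integrable exactly when $\gamma_\nu(N)$ is. Integrability of $\gamma_\nu(N)$ is the standing differentiability and majorant assumption (Assumption~\ref{asm:diff-under-integral}) applied to the counting family $(Q_\nu)$. If that assumption is not taken to cover $(Q_\nu)$, integrability of $\mathcal G_\nu(N)$ is the implicit content of the termwise-differentiation hypothesis on $A$, and I would state it as such.
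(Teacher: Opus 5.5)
Your proposal is correct and follows the paper's proof. Both write $\gamma_\nu=\mathcal G_\nu-\partial_\nu\log A(\nu)$, insert this into Proposition~\ref{prop:compound-score}, and use the termwise-differentiation hypothesis on $A$ to identify $\partial_\nu\log A(\nu)=\E[\mathcal G_\nu(N)]=\E[K_\nu(X)]$; the only difference is that you make explicit the tower-property step and the $C_\nu$-integrability of $K_\nu$ via conditional Jensen, which the paper leaves implicit.
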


\begin{proof}
Since \(q_\nu(n)=w_n(\nu)/A(\nu)\), the score of $Q_\nu$ is $\gamma_\nu(n)=\mathcal G_\nu(n)-\partial_\nu\log A(\nu).$ Proposition~\ref{prop:compound-score} therefore implies that
\[
s_\nu(k)=\E[\gamma_\nu(N)\mid X=k]
=\E[\mathcal G_\nu(N)\mid X=k]-\partial_\nu\log A(\nu)
=K_\nu(k)-\partial_\nu\log A(\nu).
\]
The differentiability condition on \(A\) gives
\[
\partial_\nu\log A(\nu)=\sum_{n\in S}\mathcal G_\nu(n)q_\nu(n)=\E[\mathcal G_\nu(N)]=\E[K_\nu(X)].
\]
Hence \(K_\nu\) is a kernel of \(C_\nu\).
\end{proof}

The compound kernel \(K_\nu\) is the posterior expectation of \(\mathcal G_\nu\). Since centring does not affect monotonicity, the compound score and compound kernel have the same monotonicity properties in \(k\). Combining Lemma~\ref{lem:cond-mean-monotone} with Proposition~\ref{prop:compound-score} or Corollary~\ref{cor:compound-kernel} gives a likelihood-ratio criterion for the compound law in terms of the score or kernel of \(Q_\nu\) alone.

\begin{prop}
\label{prop:compound-monotone}
Assume the summand law \(F\) is PF$_2$, and that the hypotheses of Corollary~\ref{cor:compound-kernel} hold.
\begin{enumerate}[label=\textup{(\roman*)}]
\item If \(n\mapsto\mathcal G_\nu(n)\) is nondecreasing on \(S\) for every \(\nu\in\mathcal I\), then \(C_{\nu_1}\lr C_{\nu_2}\) for \(\nu_1\le\nu_2\).
\item If \(n\mapsto\mathcal G_\nu(n)\) is nonincreasing on \(S\) for every \(\nu\in\mathcal I\), then \(C_{\nu_2}\lr C_{\nu_1}\) for \(\nu_1\le\nu_2\).
\end{enumerate}
\end{prop}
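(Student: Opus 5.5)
The plan is to reduce the statement to Theorem~\ref{thm:four-criteria}\,\textup{(i)} applied to the compound family \((C_\nu)_{\nu\in\mathcal I}\). The kernel used is \(K_\nu(k)=\E[\mathcal G_\nu(N)\mid X=k]\) from Corollary~\ref{cor:compound-kernel}, so the task becomes showing that \(k\mapsto K_\nu(k)\) is nondecreasing (respectively nonincreasing) on the common support of \(C_\nu\). Only the sufficiency direction of part~\textup{(i)} is needed. That direction uses only the integral representation~\eqref{eq:pair-density-int} of \(L_{\nu_2,\nu_1}\). If one prefers not to rely on Assumption~\ref{asm:diff-under-integral} for the compound family, this representation can be checked directly here: \(\nu\mapsto f_\nu(k)\) is a finite sum of differentiable positive terms, since \(F^{*n}(\{k\})=0\) for \(n>k\) because \(F\) lives on \(\N\). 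Hence \(\log f_{\nu_2}(k)-\log f_{\nu_1}(k)=\int_{\nu_1}^{\nu_2}s_\nu(k)\,\dx[\nu]\) holds pointwise, with \(s_\nu\) given by Proposition~\ref{prop:compound-score}.

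The core step is monotonicity of the posterior mean of a monotone function. Lemma~\ref{lem:cond-mean-monotone} gives that \((n,k)\mapsto\prob(N=n\mid X=k)\) is TP\(_2\), at each fixed \(\nu\) with counting law \(Q_\nu\). Consequently the conditional laws \(\mathcal L(N\mid X=k)\) increase in \(k\) in the likelihood-ratio order, and hence in the usual stochastic order. For \(k\le k'\) in the support, the standard characterisation of \(\st\) then gives \(\E[g(N)\mid X=k]\le\E[g(N)\mid X=k']\) for every nondecreasing \(g\) on \(S\) for which the expectations exist. Here the expectations exist trivially, because the posterior given \(X=k\) is supported on the finite set \(S\cap[0,k]\). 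Taking \(g=\mathcal G_\nu\) gives that \(K_\nu\) is nondecreasing, and then Theorem~\ref{thm:four-criteria}\,\textup{(i)} yields \(C_{\nu_1}\lr C_{\nu_2}\). For part~\textup{(ii)}, apply the same argument to \(g=-\mathcal G_\nu\). This makes \(-K_\nu\) nondecreasing, so the score \(s_\nu\) is nonincreasing and \(L_{\nu_2,\nu_1}\) is nonincreasing for \(\nu_1\le\nu_2\). That is exactly \(C_{\nu_2}\lr C_{\nu_1}\), by the reversal noted in Corollary~\ref{cor:monotone-kernel}.

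The main obstacle I expect is the support bookkeeping. Definition~\ref{def:orders}\,\textup{(iii)} requires \(\ell\) to be nonincreasing on the union of the supports, so I must confirm that \(C_\nu\) has a common support, an integer interval independent of \(\nu\). This holds because \(\supp(C_\nu)=\{k:\exists n\in S,\ F^{*n}(\{k\})>0\}\) depends only on \(S\) and \(F\), not on \(\nu\). The monotonicity comparison should then be carried out between arbitrary \(k<k'\) in this support, not only between consecutive ones. This is harmless, since the stochastic monotonicity in Lemma~\ref{lem:cond-mean-monotone} is stated for all pairs. A secondary point is that the lemma's TP\(_2\) conclusion is formulated on \(\N_0\times\supp(X)\) with weights \(q_n\). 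Restricting to \(n\in S\) (with \(q_\nu(n)=0\) off \(S\)) does not affect the TP\(_2\) inequalities, because the zero rows contribute equality \(0\ge0\).
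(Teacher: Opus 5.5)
Your proof is correct and is essentially the paper's. Both use monotonicity of the posterior mean \(K_\nu(k)=\E[\mathcal G_\nu(N)\mid X=k]\) from Lemma~\ref{lem:cond-mean-monotone} and then pass from a monotone score to a monotone ratio \(f_{\nu_2}/f_{\nu_1}\). The paper does this last step directly, by showing that \(\nu\mapsto\log f_\nu(k_2)-\log f_\nu(k_1)\) has nonnegative derivative, rather than by citing Theorem~\ref{thm:four-criteria}\,\textup{(i)}.

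Two small remarks. First, that mean-value route needs only differentiability of \(\nu\mapsto q_\nu(n)\), whereas your integral representation tacitly needs the score to be integrable in \(\nu\); this is automatic under the \(C^1\) hypothesis of Assumption~\ref{asm:diff-under-integral}. Second, the common support of \(C_\nu\) need not be an integer interval (e.g.\ \(F=\delta_1\) and \(S\) with gaps), but only its independence of \(\nu\) is actually used.
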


\begin{proof}
By Lemma~\ref{lem:cond-mean-monotone}, the conditional laws of \(N\) given \(X=k\) are stochastically increasing in \(k\). Hence, the conditional expectation of any nondecreasing function of \(N\) is nondecreasing in \(k\)~\cite[Theorem 1.C.5]{ShakedShanthikumar}. By Corollary~\ref{cor:compound-kernel}, \(K_\nu(k)=\E[\mathcal G_\nu(N)\mid X=k]\), which is therefore nondecreasing in \(k\) under hypothesis~\textup{(i)} and nonincreasing in \(k\) under~\textup{(ii)}.

For \textup{(i)}, fix \(k_1\le k_2\) in the common support of \(C_\nu\), and set
\[
G_{k_1,k_2}(\nu)\coloneqq \log f_\nu(k_2)-\log f_\nu(k_1).
\]
Since \(K_\nu\) is nondecreasing in \(k\) and \(s_\nu=K_\nu-\E[K_\nu(X)]\), Proposition~\ref{prop:compound-score} gives
\[
G'_{k_1,k_2}(\nu)=s_\nu(k_2)-s_\nu(k_1)\ge 0.
\]
Thus \(G_{k_1,k_2}\) is nondecreasing in \(\nu\). For \(\nu_1\le\nu_2\),
\[
\log\frac{f_{\nu_2}(k_2)}{f_{\nu_1}(k_2)}
-\log\frac{f_{\nu_2}(k_1)}{f_{\nu_1}(k_1)}
=G_{k_1,k_2}(\nu_2)-G_{k_1,k_2}(\nu_1)\ge 0.
\]
Hence \(f_{\nu_2}/f_{\nu_1}\) is nondecreasing, equivalently \(f_{\nu_1}/f_{\nu_2}\) is nonincreasing, so \(C_{\nu_1}\lr C_{\nu_2}\). The proof of \textup{(ii)} is identical with all inequalities reversed, giving \(C_{\nu_2}\lr C_{\nu_1}\).
\end{proof}

\begin{rem}
	\label{rem.affine.harmonic.kernel}
When \(\mathcal G_\nu\) is affine, \(\mathcal G_\nu(n)=a(\nu)+b(\nu)n\), Corollary~\ref{cor:compound-kernel} and Lemma~\ref{lem:cond-mean-monotone} give the explicit form
\[
K_\nu(k)=a(\nu)+b(\nu)\E[N\mid X=k],
\]
and monotonicity of \(\mathcal G_\nu\) reduces to the sign of \(b(\nu)\). The five classical counting laws fall under this case (Section~\ref{sec:compound-applications}). Harmonic kernels such as \(\psi(\alpha+n)-\psi(\alpha)\), which are nondecreasing in \(n\) without being affine, fall under Proposition~\ref{prop:compound-monotone} directly.
\end{rem}


\section{Applications}
\label{sec:applications}

The table below is organised by the way the varying parameter enters the density or mass function.

\begin{table}[!ht]
\renewcommand{\arraystretch}{1.4}
\centering
\footnotesize
\caption{Common kernels by parameter role.}
\label{tab:common-kernels}
\begin{tabular}{@{}p{5.0cm}p{2.8cm}p{2.6cm}p{3.4cm}@{}}
\toprule
\textbf{Representation / parameter} & \textbf{Kernel} & $\boldsymbol{K'(x),\ \Delta K(k)}$ & $\boldsymbol{K''(x),\ \Delta^2 K(k)}$ \\
\midrule
\multicolumn{4}{@{}l}{\emph{Natural parameter} (exponential family with statistic \(T\))} \\
\multicolumn{4}{@{}l}{\textit{\,continuous}} \\
Statistic \(T(x)\) & $T(x)$ & $T'(x)$ & $T''(x)$ \\
\multicolumn{4}{@{}l}{\textit{\,discrete}} \\
Statistic \(T(k)\) & $T(k)$ & $\Delta T(k)$ & $\Delta^2 T(k)$ \\
Conway--Maxwell--Poisson (CMP) dispersion, \(T(k)=-\log k!\) & $-\log k!$ & $-\log(k+1)<0$ & $-\log\frac{k+2}{k+1}<0$ \\
\midrule
\multicolumn{4}{@{}l}{\emph{Power, rate, or scale parameter}} \\
\multicolumn{4}{@{}l}{\textit{\,continuous}} \\
Exponential, Gamma rate \(\rho\) & $-x$ & $-1$ & $0$ \\
Weibull rate \(\lambda\), fixed shape \(\beta\) & $-x^\beta$ & $-\beta x^{\beta-1}$ & $-\beta(\beta-1)x^{\beta-2}$ \\
Half-normal scale \(\sigma\) & $x^2/\sigma^3$ & $2x/\sigma^3$ & $2/\sigma^3>0$ \\
\multicolumn{4}{@{}l}{\textit{\,discrete}} \\
Power-series factor \(a(k)\theta^k\) & $k/\theta$ & $1/\theta>0$ & $0$ \\
\midrule
\multicolumn{4}{@{}l}{\emph{Shape parameter}} \\
\multicolumn{4}{@{}l}{\textit{\,continuous}} \\
Gamma shape, Beta first shape & $\log x$ & $1/x$ & $-1/x^2$ \\
Pareto shape, fixed scale & $-\log x$ & $-1/x$ & $1/x^2$ \\
Beta second shape & $\log(1-x)$ & $-1/(1-x)$ & $-1/(1-x)^2$ \\
\multicolumn{4}{@{}l}{\textit{\,discrete}} \\
Upper Pochhammer \((\nu)_k\) & $\psi(\nu+k)-\psi(\nu)$ & $\frac{1}{\nu+k}>0$ & $-\frac{1}{(\nu+k)(\nu+k+1)}<0$ \\
Lower Pochhammer \(1/(\nu)_k\) & $-[\psi(\nu+k)-\psi(\nu)]$ & $-\frac{1}{\nu+k}<0$ & $\frac{1}{(\nu+k)(\nu+k+1)}>0$ \\
Finite-support \((\nu)_{n-k}\) & $\psi(\nu+n-k)-\psi(\nu)$ & $-\frac{1}{\nu+n-k-1}<0$ & $-\frac{1}{(\nu+n-k-2)(\nu+n-k-1)}<0$ \\
\midrule
\multicolumn{4}{@{}l}{\emph{Location parameter}} \\
\multicolumn{4}{@{}l}{\textit{\,continuous}} \\
Translate \(f_0(x-\mu)\) & $-(\log f_0)'(x-\mu)$ & $-(\log f_0)''(x-\mu)$ & $-(\log f_0)'''(x-\mu)$ \\
Gumbel, \(f_0(x)=\exp\{-x-e^{-x}\}\) & $-e^{-x}$ & $e^{-x}$ & $-e^{-x}$ \\
Log-normal location \(\mu\) (location on \(\log\)-scale, fixed \(\sigma\)) & $(\log x-\mu)/\sigma^2$ & $1/(\sigma^2 x)$ & $-1/(\sigma^2 x^2)$ \\
\multicolumn{4}{@{}l}{\textit{\,discrete}} \\
Translate \(f_0(k-\mu)\) on \(\Z\) & $-\Delta\log f_0(k-\mu)$ & $-\Delta^2\log f_0(k-\mu)$ & $-\Delta^3\log f_0(k-\mu)$ \\
\bottomrule
\end{tabular}
\end{table}

\subsection{Ordering in parametric density families}
\label{sec:within-applications}
To make the relation between score and kernel explicit, each template writes \(f_\nu=w_\nu/Z(\nu)\) and reads off the score \(s_\nu=\partial_\nu\log f_\nu\) and the kernel \(K_\nu=\partial_\nu\log w_\nu\).
\begin{enumerate}[label=\textup{(\roman*)}]
  \item \emph{Natural parameter} \(\nu\), through \(w_\nu(x)=h(x)\exp\{\nu T(x)\}\) and \(Z(\nu)=e^{A(\nu)}\):
  \[
  s_\nu(x)=T(x)-A'(\nu),\qquad K_\nu(x)=T(x),
  \]
  the sufficient statistic. The normalisation term \(A'(\nu)=\E[T(X)]\) is the centring constant.

  \item \emph{Power, rate, or scale parameter}. In the discrete case the parameter-dependent factor is \(w_\theta(k)=a(k)\theta^k\) with \(Z(\theta)=\sum_k a(k)\theta^k\), so
  \[
  s_\theta(k)=\frac{k-\E[X]}{\theta},\qquad K_\theta(k)=\frac{k}{\theta},
  \]
  the affine power-series kernel. In the continuous rate case the parameter-dependent factor is \(w_\rho(x)=h(x)e^{-\rho R(x)}\) with \(Z(\rho)=\int h\,e^{-\rho R}\,\dx\), so
  \[
  s_\rho(x)=-R(x)+\E[R(X)],\qquad K_\rho(x)=-R(x).
  \]
  In the continuous scale case \(f_\sigma(x)=\sigma^{-1}f_0(x/\sigma)\) on \((0,\infty)\), take \(w_\sigma(x)=f_0(x/\sigma)\), with the \(\sigma^{-1}\) prefactor represented by \(Z(\sigma)=\sigma\). Then
  \[
  s_\sigma(x)=-\tfrac{1}{\sigma}-\tfrac{x}{\sigma^2}(\log f_0)'(x/\sigma),\qquad K_\sigma(x)=-\tfrac{x}{\sigma^2}(\log f_0)'(x/\sigma).
  \]

  \item \emph{Shape parameter} \(\nu\), through a shape-dependent factor \(w_\nu\), for instance an upper Pochhammer block \((\nu)_k\):
  \[
  s_\nu(x)=\partial_\nu\log w_\nu(x)-\E[\partial_\nu\log w_\nu(X)],\qquad K_\nu(x)=\partial_\nu\log w_\nu(x).
  \]
  When \(\nu\) enters through an upper Pochhammer block, the kernel evaluates to \(\psi(\nu+k)-\psi(\nu)\), with lower and finite-support variants in Table~\ref{tab:common-kernels}. The continuous analogue is the logarithmic kernel \(\log x\).

  \item \emph{Location parameter} \(\mu\), through the translate \(w_\mu(x)=f_0(x-\mu)\). The translate has \(Z(\mu)=1\) independent of \(\mu\), so the normalisation term vanishes and score and kernel coincide:
  \[
  s_\mu(x)=K_\mu(x)=-(\log f_0)'(x-\mu).
  \]
  The discrete case replaces the derivative by the forward difference of \(\log f_0\).
\end{enumerate}
The examples below use the entries of Table~\ref{tab:common-kernels}. The simplest applications use Corollary~\ref{cor:monotone-kernel}: monotonicity of the kernel gives $\lr$, $\hr$, and $\st$, and concavity of the kernel gives $\lc$ through Theorem~\ref{thm:four-criteria}.

\begin{example}[Standard discrete orderings]
\label{ex:standard-discrete}
The Poisson, geometric, and negative-binomial laws are generalised power-series in the power parameter, with coefficient sequences \(a_k=1/k!\), \(a_k=1\), and \(a_k=(r)_k/k!\). By the power-parameter row of Table~\ref{tab:common-kernels},
\[
\Poi(\theta_1)\lr\Poi(\theta_2), \qquad \Geom(p_1)\lr\Geom(p_2), \qquad \NB(\alpha,p_1)\lr\NB(\alpha,p_2)
\]
for $\theta_1\le\theta_2$ and $p_2\le p_1$ (so that \(q=1-p\) is nondecreasing).

In the shape parameter, $\NB(\nu,p)$ and the beta-binomial $\BetaBin(n,r,s)$ in the upper shape \(r\) enter through an upper Pochhammer block. By the shape-parameter row of Table~\ref{tab:common-kernels} (upper Pochhammer) and Theorem~\ref{thm:four-criteria},
\[
\NB(\nu_1,p)\lr\NB(\nu_2,p)
\quad\text{and}\quad
\NB(\nu_2,p)\lc\NB(\nu_1,p),
\qquad \nu_1\le\nu_2,
\]
and the analogous statements hold for $\BetaBin$ in its first shape parameter $r$. The lower shape \(s\) of $\BetaBin$ enters through a finite-support Pochhammer block, whose row gives the reverse \(\lr\) direction.
\end{example}
\begin{example}[Standard continuous orderings]
\label{ex:standard-continuous}
Reading off the continuous rows of Table~\ref{tab:common-kernels}:
\begin{itemize}
\item \emph{Gamma.} The shape-parameter row gives \(\Gam(r_1,\rho)\lr\Gam(r_2,\rho)\) for \(r_1\le r_2\). The rate-parameter row has nonincreasing kernel \(-x\), so the order reverses: \(\Gam(r,\rho_2)\lr\Gam(r,\rho_1)\) for \(\rho_1\le\rho_2\). The exponential case is \(r=1\).

\item \emph{Beta.} The shape-parameter rows for \(\log x\) and \(\log(1-x)\) give \(\Bet(\alpha_1,\beta)\lr\Bet(\alpha_2,\beta)\) for \(\alpha_1\le\alpha_2\) and the reverse direction in \(\beta\).

\item \emph{Pareto.} The shape-parameter row for \(-\log x\) is nonincreasing, so \(\Par(\alpha_2)\lr\Par(\alpha_1)\) whenever \(\alpha_1\le\alpha_2\), with the scale parameter fixed.

\item \emph{Half-normal.} The scale-parameter row gives \(\HN(\sigma_1)\lr\HN(\sigma_2)\) for \(\sigma_1\le\sigma_2\).

\item \emph{Log-normal.} The location-parameter row gives \(\LN(\mu_1,\sigma)\lr\LN(\mu_2,\sigma)\) for \(\mu_1\le\mu_2\).
\end{itemize}
\end{example}

\subsection{Beyond monotone-kernel applications}
\label{sec:beyond-monotone-applications}

When the kernel is neither monotone nor concave on the full support, the \(\lr\) and \(\lc\) routes are unavailable but the criteria of Section~\ref{subsec:beyond-monotone} can still deliver \(\hr\) and \(\st\). Two patterns recur. The half-line pattern restricts to \([0,\infty)\) or to an interval where the kernel is unimodal with a positive value at the boundary, bringing Corollary~\ref{cor:unimodal-kernel} into reach. The tail-conditional pattern verifies the inequalities of Theorem~\ref{thm:four-criteria}\,\textup{(iii)}--\textup{(iv)} directly, comparing the kernel value at \(m\) with its conditional expectation on \(\{X\ge m\}\). This pattern fits zero-inflated laws, where atom-inflation breaks monotonicity of the kernel at the origin while the tail-conditional inequalities remain explicit.

\begin{example}[Half-Student in degrees of freedom]
\label{ex:half-student-family}
Let \((P_\nu)_{\nu>0}\) be the half-Student family on \(E=[0,\infty)\), with density
\[
f_\nu(x) =\frac{2\,\Gamma((\nu+1)/2)}{\sqrt{\nu\pi}\,\Gamma(\nu/2)} \Bigl(1+\tfrac{x^2}{\nu}\Bigr)^{-(\nu+1)/2}, \qquad x\ge 0.
\]
A direct calculation gives
\[
(s_\nu)'(x)=\frac{x(1-x^2)}{(\nu+x^2)^2}, \qquad x>0,
\]
so \(s_\nu\) is unimodal in \(x\) on \([0,\infty)\) with common mode \(c=1\) for every \(\nu>0\). Writing \(z=\nu/2\),
\[
s_\nu(0)=\partial_\nu\log f_\nu(0) =\frac12\{\psi(z+\tfrac12)-\psi(z)\}-\frac{1}{4z}>0,
\]
because concavity of \(\psi\) and \(\psi(z+1)-\psi(z)=1/z\) imply \(\psi(z+\tfrac12)-\psi(z)>1/(2z)\). Both hypotheses of Corollary~\ref{cor:unimodal-kernel} (common-mode unimodality and \(s_\nu(x_0)>0\)) hold uniformly in \(\nu\), so
\[
\nu_1<\nu_2 \quad\Longrightarrow\quad P_{\nu_2}\hr P_{\nu_1} \text{ and } P_{\nu_2}\st P_{\nu_1}.
\]
On \(\R\) the underlying Student-\(t\) kernel is symmetric and neither monotone nor concave, so the global criteria of Theorem~\ref{thm:four-criteria} do not apply. 
\end{example}

\begin{example}[Zero-inflated Poisson]
\label{ex:zero-inflated}
Take the Poisson base law $Q_\theta=\Poi(\theta)$ with $\theta>0$, and use the superscript $Q$ to mark its kernel and score. A kernel of $Q_\theta$ is the affine GPS kernel $K_\theta^Q(k)=k/\theta$, nondecreasing in $k$. The Poisson family is therefore $\lr$-, $\hr$-, and $\st$-monotone in $\theta$. Its score is \(s_\theta^Q(k)=k/\theta-1\), so \(c_Q(\theta)=K_\theta^Q(k)-s_\theta^Q(k)=1\). Zero-inflation gives, for $\pi\in(0,1)$,
\[
f_\theta(0)=(1-\pi)+\pi e^{-\theta},
\qquad
f_\theta(k)=\pi e^{-\theta}\theta^k/k!\quad (k\ge 1).
\]
Differentiating in $\theta$ gives the score of the zero-inflated law. Adding the constant $c_Q(\theta)=1$ produces a kernel that agrees with $K_\theta^Q$ at every $k\ge 1$:
\[
K_\theta(k)=k/\theta\quad (k\ge 1),
\qquad
K_\theta(0)=1-\alpha_\theta,
\qquad
\alpha_\theta\coloneqq \frac{\pi e^{-\theta}}{(1-\pi)+\pi e^{-\theta}}.
\]
Now $1-\alpha_\theta=(1-\pi)e^{\theta}/[(1-\pi)e^{\theta}+\pi]$, so as $\theta\to\infty$, \(1-\alpha_\theta\to 1\) and $K_\theta(0)\to 1$, while $K_\theta(1)=1/\theta\to 0$. For large $\theta$, $K_\theta(0)>K_\theta(1)$, so $\Delta K_\theta$ changes sign and the zero-inflated family is not $\lr$-ordered in $\theta$. Similarly $\Delta^2 K_\theta(0)>0$ rules out $\lc$. The \(\st\) and \(\hr\) directions can nevertheless be checked directly from Theorem~\ref{thm:four-criteria}\,\textup{(iii)}--(iv). Since \(K_\theta\) is a kernel, \(\E[K_\theta(X)]=1\). At \(m=0\), \(\E[K_\theta(X)\mid X\ge 0]=1\). For \(m\ge 1\), if \(N\sim\Poi(\theta)\), the event \(\{X\ge m\}\) excludes the inflated atom, so
\[
\E[K_\theta(X)\mid X\ge m]
=\frac{1}{\theta}\E[N\mid N\ge m]
=\frac{\prob(N\ge m-1)}{\prob(N\ge m)}
\ge 1.
\]
Thus condition \textup{(iii)} holds. Moreover,
\[
K_\theta(m)=\frac{m}{\theta}
\le \frac{1}{\theta}\E[N\mid N\ge m]
=\E[K_\theta(X)\mid X\ge m]
\qquad (m\ge 1),
\]
while \(K_\theta(0)=1-\alpha_\theta\le 1\), so condition \textup{(iv)} holds as well. Therefore \(P_{\theta_1}\st P_{\theta_2}\) and \(P_{\theta_1}\hr P_{\theta_2}\) for all \(\theta_1\le\theta_2\), by Theorem~\ref{thm:four-criteria}\,\textup{(iii)}--\textup{(iv)}.
\end{example}

\begin{example}[Zero-inflated exponential]
\label{ex:zero-inflated-exp}
The continuous analogue takes the base law \(Q_\theta=\Exp(\theta)\) on \((0,\infty)\) and forms the atom-inflated law
\[
P_\theta=(1-\pi)\delta_0+\pi \Exp(\theta)
\qquad\text{on }[0,\infty).
\]
With respect to the dominating measure \(\delta_0+\dx\), the score is
\[
s_\theta(0)=0,\qquad
s_\theta(x)=\frac{1}{\theta}-x,\quad x>0.
\]
Since \(\E[s_\theta(X)]=0\), the score \(s_\theta\) is itself a kernel, and we write \(K_\theta\coloneqq s_\theta\). For \(m>0\), the event \(\{X\ge m\}\) excludes the atom at \(0\), so the conditional law coincides with the exponential tail. Hence
\begin{align*}
\E[K_\theta(X)\mid X\ge m]
&=-m\le 0=\E[K_\theta(X)],\\
K_\theta(m)
&=\frac{1}{\theta}-m\ge -m
=\E[K_\theta(X)\mid X\ge m].
\end{align*}
At \(m=0\), both inequalities hold with equality. Thus the inequalities of Theorem~\ref{thm:four-criteria}\,\textup{(iii)}--\textup{(iv)} hold with the reversed sign, so the family is \(\st\)- and \(\hr\)-decreasing in \(\theta\): \(P_{\theta_2}\st P_{\theta_1}\) and \(P_{\theta_2}\hr P_{\theta_1}\) whenever \(\theta_1\le\theta_2\).

The likelihood-ratio direction still fails. With respect to the dominating measure \(\delta_0+\dx\), the density of \(P_\theta\) is
\[
\frac{\dx[]P_\theta}{\dx[\delta_0+\dx]}(x)
=\begin{dcases}
1-\pi, & x=0,\\
\pi\theta e^{-\theta x}, & x>0.
\end{dcases}
\]
For \(\theta_1<\theta_2\), the corresponding likelihood ratio equals
\[
1 \quad\text{at }x=0,
\qquad
\frac{\theta_2}{\theta_1}e^{-(\theta_2-\theta_1)x}
\quad\text{for }x>0.
\]
Since \(\theta_2/\theta_1>1\), the likelihood ratio jumps upward at \(0\), so it is not nonincreasing on \([0,\infty)\). Therefore the family is not \(\lr\)-ordered in \(\theta\), even though the \(\st\)- and \(\hr\)-orders survive atom inflation.
\end{example}

\subsection{Joint-parameter and interpolation paths}
\label{sec:path-applications}

The density-family criteria apply directly to any \(C^1\) path \(t\mapsto P_t\) with common support and satisfying Assumption~\ref{asm:diff-under-integral}, with \(t\) in place of \(\nu\). Treat \(t\) as the varying parameter and use the path score \(\partial_t\log f_t\). When a reduced factor \(w_t\) is available, the path kernel is \(K_t(x)=\partial_t\log w_t(x)\).

For a path \(t\mapsto(\theta_1(t),\dots,\theta_d(t))\) inside a $d$-parameter family with parameter-dependent factor \(w_{\theta_1,\dots,\theta_d}\), the chain rule gives the path kernel as a weighted combination of single-parameter kernels:
\begin{equation}\label{eq:chain-rule-kernel}
K_t(x)=\sum_{i=1}^{d}\theta_i'(t)\,K^{(i)}_{\theta(t)}(x),
\qquad K^{(i)}_{\theta(t)}(x)\coloneqq
\left.\partial_{\theta_i}\log w_{\theta_1,\dots,\theta_d}(x)\right|_{(\theta_1,\dots,\theta_d)=\theta(t)}.
\end{equation}
The joint-parameter examples (Examples~\ref{ex:nb-joint}, \ref{ex:bb-joint-rs}, and~\ref{ex:gamma-path}) read off the single-parameter kernels from Table~\ref{tab:common-kernels} and combine them through~\eqref{eq:chain-rule-kernel}. The interpolation example (Example~\ref{ex:bb-bin}) connects two laws of distinct factor form along a constructed path on which the kernel varies nontrivially with the path parameter. The degenerate case, in which the path kernel is constant along the path, is treated separately in Section~\ref{sec:pairwise-applications}.

\begin{example}[Negative binomial along a two-parameter path]
\label{ex:nb-joint}
Let \(r_1\le r_2\) and \(q_1\le q_2\) with \(q_i=1-p_i\). Along the linear path \(r(t)=(1-t)r_1+tr_2\), \(q(t)=(1-t)q_1+tq_2\), the chain rule~\eqref{eq:chain-rule-kernel} combines the harmonic kernel \(\psi(r+k)-\psi(r)\) in the shape parameter (upper Pochhammer row of Table~\ref{tab:common-kernels}) with the affine kernel \(k/q\) in the success-failure parameter \(q=1-p\) (discrete power-series row):
\[
K_t(k)
=r'(t)\,\bigl[\psi(r(t)+k)-\psi(r(t))\bigr]
+q'(t)\,\frac{k}{q(t)},
\]
nondecreasing and concave in \(k\) for \(r'(t),q'(t)\ge 0\). Theorem~\ref{thm:four-criteria} applied along the path gives \(\NB(r_1,p_1)\lr\NB(r_2,p_2)\) and \(\NB(r_2,p_2)\lc\NB(r_1,p_1)\).
\end{example}

\begin{example}[Beta-binomial along a two-parameter path]
\label{ex:bb-joint-rs}
Let \(r_1\le r_2\) and \(s_1\ge s_2\), with all parameters positive and \(n\ge 1\) fixed. The Beta-binomial mass function has parameter-dependent factor \(w_{r,s}(k)\propto(r)_k(s)_{n-k}\), with single-parameter kernels \(\psi(r+k)-\psi(r)\) (upper Pochhammer) and \(\psi(s+n-k)-\psi(s)\) (finite-support Pochhammer) from Table~\ref{tab:common-kernels}. Along the linear path \(r(t)=(1-t)r_1+tr_2\), \(s(t)=(1-t)s_1+ts_2\), the chain rule~\eqref{eq:chain-rule-kernel} gives a path kernel with forward difference
\[
\Delta K_t(k)
=\frac{r'(t)}{r(t)+k}-\frac{s'(t)}{s(t)+n-k-1}\ge 0,
\qquad k=0,\dots,n-1,
\]
because \(r'(t)\ge 0\) and \(s'(t)\le 0\). Theorem~\ref{thm:four-criteria} applied along the path gives \(\BetaBin(n,r_1,s_1)\lr\BetaBin(n,r_2,s_2)\).
\end{example}

\begin{example}[Beta-binomial to binomial interpolation]
\label{ex:bb-bin}
If $p\ge (r+n-1)/(r+s+n-1)$, then $\BetaBin(n,r,s)\lr \Bin(n,p)$.

Consider the path of factors \(w_c(k)=\binom{n}{k}(r+cp)_k(s+c(1-p))_{n-k}\), \(c\ge 0\), and let \(P_c\) be the law obtained by normalising \(w_c\). At \(c=0\) this is \(\BetaBin(n,r,s)\). As \(c\to\infty\), \((a+cq)_k=(cq)^k(1+O(1/c))\) uniformly in \(k\le n\), so \(w_c(k)/c^n\to\binom{n}{k}p^k(1-p)^{n-k}\) pointwise in \(k\), and \(P_c\to\Bin(n,p)\) after normalisation. The path kernel is \(K_c(k)=\partial_c\log w_c(k)\), so differentiating each Pochhammer factor through \(\partial_c\log(r+cp)_k=p[\psi(r+cp+k)-\psi(r+cp)]\) and the analogous identity for the second factor gives
\[
K_c(k)=p\bigl[\psi(r+cp+k)-\psi(r+cp)\bigr]+(1-p)\bigl[\psi(s+c(1-p)+n-k)-\psi(s+c(1-p))\bigr].
\]
Its forward difference is
\[
\Delta K_c(k)
=\frac{p}{r+cp+k}-\frac{1-p}{s+c(1-p)+n-k-1}
=\frac{p(s+n-1)-(1-p)r-k}{(r+cp+k)(s+c(1-p)+n-k-1)}.
\]
The stated condition \(p\ge(r+n-1)/(r+s+n-1)\) is exactly the requirement that the numerator be nonnegative at \(k=n-1\), so \(\Delta K_c(k)\ge 0\) for every \(k=0,\dots,n-1\). Theorem~\ref{thm:four-criteria}\,\textup{(i)} gives \(P_0\lr P_c\) for every \(c>0\). The likelihood-ratio order is preserved under pointwise limits of densities, so letting \(c\to\infty\) yields \(\BetaBin(n,r,s)\lr\Bin(n,p)\).
\end{example}

\begin{example}[Gamma along a shape--scale path]
\label{ex:gamma-path}
For a continuous analogue, take a \(C^1\) path \(t\mapsto(r(t),\beta(t))\) with \(r'(t)\ge 0\) and \(\beta'(t)\ge 0\). The gamma density \(f_{r,\beta}(x)\propto x^{r-1}e^{-x/\beta}\) has parameter-dependent factor \(w_{r,\beta}(x)=x^{r-1}e^{-x/\beta}\), with single-parameter kernels
\[
K^{(r)}_{r}(x)=\partial_r\log w_{r,\beta}(x)=\log x,
\qquad
K^{(\beta)}_{\beta}(x)=\partial_\beta\log w_{r,\beta}(x)=x/\beta^2.
\]
Both are nondecreasing on \((0,\infty)\). The first is the shape-parameter row of Table~\ref{tab:common-kernels}; the second comes from differentiating \(-x/\beta\) in \(\beta\). The chain-rule combination~\eqref{eq:chain-rule-kernel} gives the path kernel
\[
K_t(x)=r'(t)\log x+\beta'(t)\,x/\beta(t)^2,
\]
which is once more nondecreasing on \((0,\infty)\). Theorem~\ref{thm:four-criteria}\,\textup{(i)} therefore gives
\[
\Gam(r_0,\beta_0)\lr\Gam(r_1,\beta_1) \qquad\text{whenever } r_0\le r_1 \text{ and } \beta_0\le\beta_1,
\]
on taking any monotone path with \(r'\ge0\) and \(\beta'\ge0\) between the endpoints, for instance the linear path. In the rate parametrisation \(\rho=1/\beta\), the scale kernel \(x/\beta^2\) becomes the rate kernel \(-x\) (the rate-parameter row of Table~\ref{tab:common-kernels}, up to sign), and a path with \(r'(t)\ge0\) and \(\rho'(t)\le0\) has kernel
\[
K_t(x)=r'(t)\log x-\rho'(t)\,x,
\]
again nondecreasing. So increasing the shape \(r\) and decreasing the rate \(\rho\) both push the gamma law upward in \(\lr\). Consequently, for \(r_0\le r_1\) and \(\rho_0\ge\rho_1\),
\[
\Gam(r_0,\rho_0)\lr\Gam(r_1,\rho_1).
\]

\end{example}

\subsection{Comparisons via the pairwise kernel}
\label{sec:pairwise-applications}

The joint-parameter comparisons of the previous subsection use an explicit one-parameter path. When two laws admit compatible factorisations, the algebraic log-ratio of their factors can be used directly, without choosing a cumbersome and application-specific path. Fix two laws \(P,Q\) on a common discrete support \(J\subseteq\N_0\) with positive factors \(w^P_k,w^Q_k\), so \(f_P(k)\propto w^P_k\) and \(f_Q(k)\propto w^Q_k\). The path \(w_t(k)=(w^P_k)^t(w^Q_k)^{1-t}\), \(t\in[0,1]\), interpolates from \(Q\) at \(t=0\) to \(P\) at \(t=1\), and its path score is constant in \(t\) and equal to the log-factor ratio:
\[
\partial_t\log w_t(k) =\log\frac{w^P_k}{w^Q_k}.
\]
The \emph{pairwise kernel}
\[
{\cal K}(k)\coloneqq \log\frac{w^P_k}{w^Q_k}, \qquad k\in J,
\]
is therefore a path kernel along this interpolation, defined up to an additive constant absorbed by the normalising ratio. Theorem~\ref{thm:four-criteria} applied along the path with \(t_0=0\) and \(t_1=1\) reads
\[
Q\lr P \iff {\cal K}\text{ is nondecreasing on }J, \qquad P\lc Q \iff {\cal K}\text{ is concave on }J,
\]
with the discrete tests \(\Delta{\cal K}(k)\ge 0\) and \(\Delta^2{\cal K}(k)\le 0\). Under this concavity condition, the hazard-rate and usual stochastic orders reduce to endpoint checks at \(k_*=\min J\) by~\cite{DerbaziPairwise}. Each pairing of factor forms below decomposes \({\cal K}\) into transparent blocks.

\paragraph{Shared GPS representation.}
For \(w^P_k=a_k\theta_P^k\) and \(w^Q_k=b_k\theta_Q^k\),
\[
{\cal K}(k) =\log\frac{a_k}{b_k}+k\log\frac{\theta_P}{\theta_Q}, \qquad \Delta^2{\cal K}(k) =\Delta^2\log\frac{a_k}{b_k}.
\]
The affine power term shifts \(\Delta{\cal K}\) but drops out of \(\Delta^2{\cal K}\), so \(\lc\) is determined by the coefficient-ratio shape and \(\lr\) follows under a sign condition on \(\Delta{\cal K}\).

\begin{example}[Katz-class pairs]
\label{ex:katz-between}
The binomial, Poisson, and negative-binomial laws share the GPS-power representation with coefficients \(\binom{n}{k}\), \(1/k!\), and \((r)_k/k!\). The GPS comparison gives:
\begin{enumerate}[label=\textup{(\roman*)}]
\item \(\Bin(n,p)\lc\Poi(\lambda)\), with \(\Bin\lr\Poi\iff np\le(1-p)\lambda\) and \(\Bin\st\Poi\iff(1-p)^n\ge e^{-\lambda}\).
\item \(\Bin(n,p)\lc\NB(r,\pi)\), with \(\Bin\lr\NB\iff np\le r(1-\pi)\) and \(\Bin\st\NB\iff(1-p)^n\ge\pi^r\).
\item \(\Poi(\lambda)\lc\NB(r,p)\), with \(\Poi\lr\NB\iff\lambda\le r(1-p)\) and \(\Poi\st\NB\iff e^{-\lambda}\ge p^r\).
\end{enumerate}
The coefficient ratios are log-concave. The binomial--Poisson ratio is the falling factorial \(\prod_{j=0}^{k-1}(n-j)\), the binomial--negative-binomial ratio multiplies this by \(1/(r)_k\), and the Poisson--negative-binomial ratio is \(1/(r)_k\).
\end{example}

\paragraph{Shared exponential-family representation.}
Consider
\[
w^P_k=h_P(k)e^{\theta_P T_P(k)}
\qquad\text{and}\qquad
w^Q_k=h_Q(k)e^{\theta_Q T_Q(k)}.
\]
Then
\[
{\cal K}(k) =\log\frac{h_P(k)}{h_Q(k)} +\theta_PT_P(k)-\theta_QT_Q(k).
\]
A shared carrier \(h_P=h_Q\) cancels the carrier block, leaving \({\cal K}=\theta_PT_P-\theta_QT_Q\). A shared affine statistic \(T_P=T_Q=T\) cancels the statistic block under \(\Delta^2\), so \((h_P/h_Q)\) log-concave on \(J\) implies \(P\lc Q\).

\begin{example}[CMP in the dispersion]
\label{ex:cmp-in-nu}
View \(\CMP(\lambda,\nu)\) as an exponential family with \(\eta=-\nu\) at fixed \(\lambda\), carrier \(h(k)=\lambda^k\), and statistic \(T(k)=\log(k!)\). The shared-carrier rule with \(\nu_1\le\nu_2\) gives \({\cal K}(k)=(\nu_1-\nu_2)\log(k!)\). Since \(\log(k!)\) is increasing (so \({\cal K}\) is nonincreasing, giving the \(\lr\) direction below) and convex (so \({\cal K}\) is concave, giving the \(\lc\) direction), \(\CMP(\lambda,\nu_2)\lr\CMP(\lambda,\nu_1)\) and \(\CMP(\lambda,\nu_2)\lc\CMP(\lambda,\nu_1)\).
\end{example}

\paragraph{Exponential-family versus GPS representation.}
For \(w^P_k=h_P(k)e^{\theta_PT_P(k)}\) and \(w^Q_k=b_k\theta_Q^k\) with \(T_P\) affine,
\[
\Delta^2{\cal K}(k) =\Delta^2\log\frac{h_P(k)}{b_k},
\]
so \((h_P/b_k)\) log-concave on \(J\) implies \(P\lc Q\). When \(T_P(k)=k\), this reduces to the GPS comparison with coefficient sequence \(a_k=h_P(k)\).

\begin{example}[CMP comparisons]
\label{ex:cmp-benchmarks}
View \(\CMP(\mu,\nu)\) in exponential-family representation with \(T_P(k)=k\) and \(h_P(k)=1/(k!)^\nu\). The exponential-family vs GPS comparison gives:
\begin{enumerate}[label=\textup{(\roman*)}]
\item For \(\nu\ge 1\), \(\CMP(\mu,\nu)\lc\Poi(\lambda)\) with \(\CMP\lr\Poi\iff\mu\le\lambda\). For \(\nu\le 1\), the direction reverses: \(\Poi(\lambda)\lc\CMP(\mu,\nu)\) with \(\Poi\lr\CMP\iff\lambda\le\mu\).
\item \(\CMP(\mu,\nu)\lc\Geom(p)\) for every \(\nu>0\), with \(\CMP\lr\Geom\iff\mu\le 1-p\).
\item For \(\nu\ge 1\), \(\CMP(\lambda,\nu)\lc\NB(r,p)\): the ratio \(h_P/b_k=(k!)^{1-\nu}/(r)_k\) is log-concave (log-concave numerator times the reciprocal of a log-convex Pochhammer block).
\end{enumerate}
\end{example}

\paragraph{Pochhammer-product factors.}
When both factors are products of Pochhammer or finite-support Pochhammer blocks, the log-factor ratio is a sum of logarithms with arguments linear in \(k\), and \(\Delta{\cal K}\) is a difference of harmonic terms. A sign-of-slope test at a single endpoint then governs the order.

\begin{example}[Beta-binomial vs hypergeometric]
\label{ex:bb-hyp}
Assume \(r,s>0\) and that the hypergeometric law has support \(\{0,\dots,n\}\), for instance \(B,W\ge n\). Take \(P=\Hyp(B,W,n)\) and \(Q=\BetaBin(n,r,s)\), with factors
\[
w^P_k=\binom{B}{k}\binom{W}{n-k},
\qquad
w^Q_k=\binom{n}{k}(r)_k(s)_{n-k}.
\]
The pairwise kernel \({\cal K}(k)=\log(w^P_k/w^Q_k)\) collects the four Pochhammer blocks, and its forward difference simplifies to
\[
\Delta{\cal K}(k)
=\log\frac{(B-k)(s+n-k-1)}{(W-n+k+1)(r+k)},
\]
which is nonincreasing in \(k\). Hence \(\Delta{\cal K}(k)\ge \Delta{\cal K}(n-1)\) for every \(k=0,\dots,n-1\), and \(\Delta{\cal K}(n-1)\ge 0\) reduces to \(W(r+n-1)\le s(B-n+1)\). Under this condition \({\cal K}\) is nondecreasing on \(\{0,\dots,n\}\), so the criterion \(Q\lr P\iff{\cal K}\) nondecreasing gives
\[
\BetaBin(n,r,s)\lr\Hyp(B,W,n).
\]
\end{example}

The shared-statistic exponential-family comparison admits a dual reading. \(\Poi(\lambda)\) and \(\NB(r,p)\) admit exponential-family representations with common statistic \(T(k)=k\). For \(\Poi\), \(\theta_P=\log\lambda\) and \(h_P(k)=1/k!\). For \(\NB\), \(\theta_Q=\log(1-p)\) and \(h_Q(k)=(r)_k/k!\). The carrier ratio \(h_P/h_Q=1/(r)_k\) is the reciprocal of a log-convex Pochhammer block, hence log-concave, and the shared-statistic exponential-family comparison recovers \(\Poi(\lambda)\lc\NB(r,p)\) from Example~\ref{ex:katz-between}\,\textup{(iii)} via a different route. The pairwise kernel is invariant under reparameterisation up to the additive constant absorbed by the normalising ratio: the same \({\cal K}\) arises from the GPS and exponential-family representations.

\subsection{Compound applications}
\label{sec:compound-applications}

Proposition~\ref{prop:compound-monotone} reduces the likelihood-ratio comparison of compound laws to monotonicity of \(\mathcal G_\nu\) in $n$, provided the summand law is PF$_2$. The five classical counting laws have an affine kernel, whose monotonicity reduces to the sign of its slope, so a single application of the proposition gives their $\lr$ comparisons in one statement.

\begin{example}[Compound likelihood-ratio comparison in the parameter of \(Q_\nu\)]
\label{ex:compound-rate}
Let \(F\) be PF$_2$. For each of the five  counting laws \(Q_\nu\) listed in Table~\ref{tab:compound-slopes}, inspect the factor \(w_n(\nu)\) in \(q_\nu(n)\) and differentiate its logarithm to obtain \(\mathcal G_\nu(n)\). Each \(\mathcal G_\nu\) is affine in \(n\) with the slope sign shown in the table, so monotonicity in \(n\) holds and Proposition~\ref{prop:compound-monotone} gives the \(\lr\) direction listed alongside.
\end{example}

\begin{table}[ht]
\centering
\footnotesize
\renewcommand{\arraystretch}{1.4}
\caption{Kernels \(\mathcal G_\nu(n)=\partial_\nu\log w_n(\nu)\) and \(\lr\) direction in $\nu$ for the counting laws covered by Example~\ref{ex:compound-rate}.}\label{tab:compound-slopes}
\begin{tabular}{@{}lllll@{}}
\toprule
\textbf{Counting law} \(Q_\nu\) & \textbf{Parameter} \(\nu\) & \textbf{Kernel} \(\mathcal G_\nu(n)\) & \textbf{Slope} \(\Delta\mathcal G_\nu(n)\) & \textbf{$\lr$ direction} \\
\midrule
$\Poi(\lambda)$ & $\lambda>0$ & $n/\lambda$ & $+1/\lambda$ & $C_{\nu_1}\lr C_{\nu_2}$ for $\nu_1\le\nu_2$ \\
$\Geom(p)$ on $\N_0$ & $p\in(0,1)$ & $-n/(1-p)$ & $-1/(1-p)$ & $C_{\nu_2}\lr C_{\nu_1}$ for $\nu_1\le\nu_2$ \\
$\NB(\alpha,p)$, $\alpha$ fixed & $p\in(0,1)$ & $\alpha/p-n/(1-p)$ & $-1/(1-p)$ & $C_{\nu_2}\lr C_{\nu_1}$ for $\nu_1\le\nu_2$ \\
$\Bin(n_0,p)$, $n_0$ fixed & $p\in(0,1)$ & $n/(p(1-p))-n_0/(1-p)$ & $+1/(p(1-p))$ & $C_{\nu_1}\lr C_{\nu_2}$ for $\nu_1\le\nu_2$ \\
$\mathrm{LogSeries}(\theta)$ on $\N$ & $\theta\in(0,1)$ & $n/\theta$ & $+1/\theta$ & $C_{\nu_1}\lr C_{\nu_2}$ for $\nu_1\le\nu_2$ \\
\bottomrule
\end{tabular}
\end{table}

The compound Poisson case recovers the standard ordering of compound Poisson laws in the rate parameter under the PF$_2$ summand condition. This covers the geometric-jump model used in actuarial science, and more generally any compound Poisson law with a PF$_2$ jump distribution on \(\N\). The compound geometric and compound negative-binomial cases reverse direction because \(\nu=p\) controls the success probability rather than a rate. The compound geometric case contrasts with~\cite{XiaLv2024}: relative log-concavity is unavailable for compound geometric laws with general PF$_2$ summand laws, but the likelihood-ratio comparison delivered by Example~\ref{ex:compound-rate} holds throughout.

Example~\ref{ex:compound-rate} fixes the shape parameter of $\NB(\alpha,p)$ and varies the success probability $p$. Varying instead the shape parameter $\alpha$ gives a harmonic kernel that is nondecreasing but not affine in $n$, so Proposition~\ref{prop:compound-monotone} still applies.

\begin{example}[Compound negative-binomial in the shape parameter]
\label{ex:compound-shape}
Let \(F\) be PF$_2$. Let \(C_\alpha\) be the compound law with \(N\sim\NB(\alpha,p)\), \(p\in(0,1)\) fixed, and \(\alpha>0\) varying. The \(\alpha\)-dependent factor of \(\NB(\alpha,p)\) is \(w_n(\alpha)=(\alpha)_n/n!\), so the kernel is the harmonic kernel
\[
\mathcal G_\alpha(n)=\partial_\alpha\log w_n(\alpha)=\psi(\alpha+n)-\psi(\alpha),
\]
which is nondecreasing in \(n\) for every \(\alpha>0\). Proposition~\ref{prop:compound-monotone} gives \(C_{\alpha_1}\lr C_{\alpha_2}\) for \(\alpha_1\le\alpha_2\).
\end{example}
Consequently Proposition~\ref{prop:compound-monotone} gives \(\lr\) but not, on its own, \(\lc\) in the shape parameter of the compound negative-binomial.

\begin{example}[Poisson-binomial law]
\label{ex:poisson-binomial}
Let \(X=\xi_1+\cdots+\xi_n\) with independent \(\xi_i\sim\Ber(p_i)\) and write \(\PB(\mathbf p)\) for the law of \(X\) with success-probability vector \(\mathbf p=(p_1,\dots,p_n)\in(0,1)^n\). The summands are independent Bernoullis, so the coordinate decomposition
\[
X=\xi_1+S_{-1},\qquad S_{-1}\coloneqq \xi_2+\cdots+\xi_n,
\]
exhibits \(X\) as the convolution of \(\xi_1\sim\Ber(p_1)\) and the law of \(S_{-1}\), which is independent of \(\xi_1\). The law of \(S_{-1}\) is PF$_2$, since Bernoulli laws are PF$_2$ and the PF$_2$ property is closed under convolution. Take \(\nu\coloneqq p_1\). The compound-binomial row of Table~\ref{tab:compound-slopes} with \(n_0=1\) and \(F=\delta_1\) gives \(\Ber(\nu_1)\lr\Ber(\nu_2)\) for \(\nu_1\le\nu_2\). Convolution with a PF$_2$ law preserves the likelihood-ratio order~\cite[Theorem~1.C.11]{ShakedShanthikumar}, hence
\[
\PB(\nu_1,p_2,\dots,p_n)\lr\PB(\nu_2,p_2,\dots,p_n),\qquad \nu_1\le\nu_2.
\]
Iterating coordinatewise and using transitivity of \(\lr\),
\[
\mathbf p\le\mathbf q\text{ coordinatewise}
\Longrightarrow
\PB(\mathbf p)\lr\PB(\mathbf q),
\]
on \((0,1)^n\) and, by continuity, on \([0,1]^n\). The fully symmetric case \(\mathbf p_i=(p_i,\dots,p_i)\) recovers \(\Bin(n,p_1)\lr\Bin(n,p_2)\) for \(p_1\le p_2\), already in the compound-binomial row of Table~\ref{tab:compound-slopes}.
\end{example}


\section{Conclusion}
\label{sec:discussion}

We have shown that the four stochastic orders considered here can be read from the score, or equivalently from any kernel \(K_\nu\) on the support. The score is the unique centred kernel, and any other kernel differs from it only by a parameter-dependent constant. Thus, the score and kernel can be used interchangeably in the criteria, while the kernel form often removes normalising terms and leaves a simpler state-dependent expression.

This equivalence extends beyond comparisons within a parametric family. Along joint-parameter paths, the path kernel is obtained by the chain rule. In pairwise factor-form comparisons, the kernel is the log ratio of the factors and gives a direct comparison without first choosing an interpolating family. More generally, when a law is induced from parameter-dependent factors or atom weights, the kernel can be obtained at the factor level, before centring produces the score. In compound laws, the kernel of the counting law is the parameter derivative of the log atom weights, and the compound kernel is its posterior average given the compound sum.


\printbibliography

\end{document}